\documentclass[12pt]{article}
\usepackage[utf8]{inputenc} 
\usepackage{amsmath,amsfonts,amssymb,amsthm,amstext}
\usepackage{graphicx}
\hoffset-.1\textwidth\textwidth1.2\textwidth
\voffset-.1\textheight\textheight1.2\textheight
\parskip1ex \parindent2ex
 
\newcommand{\pic}[4]{\vspace{1ex}\setlength{\unitlength}{.05\textwidth}
\begin{picture}(0,0)(0,0)
\put(#2){\includegraphics[#4]{#1.jpg}} 
\end{picture}\vspace*{#3cm}\vspace{1ex}} 

\newtheorem{theo}{Theorem}[section]
\newtheorem{lemm}[theo]{Lemma}
\newtheorem{coro}[theo]{Corollary}
\newtheorem{prop}[theo]{Proposition}

\theoremstyle{definition}
\newtheorem{defi}[theo]{Definition}
\newtheorem{remark}[theo]{Remark}
\newcounter{example}
\newcommand{\example}{\addtocounter{example}{1}\smallskip\noindent\textbf{Example \arabic{example}. }}

\def\C{\mathbb{C}}
\def\R{\mathbb{R}}

\def\chcos{\sqrt{\cosh\eta-\cos\theta}}
\def\chcosz{\sqrt{\cosh\eta_0-\cos\theta}}
\def\dom{\Omega_{\eta_0}}

\def\bdry{{\partial\dom}}
\def\opt{\textrm{opt}}

\newcommand{\ind}[2]{{}^{\hspace{.1ex}#2}_{#1}}
\newcommand{\harint}[2]{I\ind{#1}{#2}}
\newcommand{\harext}[2]{E\ind{#1}{#2}}
\DeclareMathOperator{\grad}{{\mbox{\rm grad}}}
\DeclareMathOperator{\nor}{{\mbox{\rm nor}}}
\DeclareMathOperator{\har}{{\mbox{\rm Har}}}

\begin{document}

\begin{center}
  \parbox{.8\textwidth}{\begin{center}
      \large NEUMANN PROBLEM ON A TORUS \\
    Z.\ Ashtab, J.\ Morais, R.\ M.\ Porter
  \end{center} }
 
\parbox{.8\textwidth}
 {Department of Mathematics, CINVESTAV-Quer\'etaro, Libramiento Norponiente \#2000, Fracc.~Real de Juriquilla. Santiago de Quer\'etaro, 
C.P.~76230 Mexico.}

 \parbox{.8\textwidth}
{Department of Mathematics, ITAM, R\'io Hondo~\#1, Col.~Progreso Tizap\'an, Mexico City, C.P.~01080 Mexico. }
\end{center}

\begin{center}
  \parbox{.9\textwidth}{\textbf{Abstract.} We
      consider the Dirichlet-to-Neumann mapping and the Neumann
      problem for the Laplace operator on a torus, given in
      toroidal coordinates. The Dirichlet-to-Neumann mapping is
      expressed with respect to series expansions in toroidal
      harmonics and thereby reduced to algebraic manipulations on the
      coefficients. A method for computing the numerical solutions of
      the corresponding Neumann problem is presented, and numerical
      illustrations are provided. We combine the results for interior
      and exterior domains to solve the Neumann problem for a toroidal
      shell.}
\end{center}

\noindent\textbf{Keywords:} Laplacian, Dirichlet-to-Neumann map,
Neumann problem, toroidal harmonics, potential theory on a torus,
potential theory on a solenoid.
 
\noindent\textbf{MSC Classification.} {Primary 31B20.   Secondary  35J05 35J25 65N21}.

\section{Introduction}

The Dirichlet-to-Neumann map for the Laplace equation plays an
important role in various areas of analysis (e.g., elliptic boundary
value problems \cite{DK1987,Lannes2013,McLean2000,Raynor1934}, inverse problems
\cite{Calderon1980,Isakov1998}) and physics (e.g., electromagnetism
\cite{Caldwell1984}, electrical transmission \cite{DZ2009}, fluid
mechanics \cite{CraigSulem1993}, electrical impedance tomography
\cite{IMNS2006,KV1985,SU1990}). The map describes the relationship
between the value of a function
$f\colon \partial \Omega \to \mathbb{R}$ (Dirichlet datum) on the
boundary $\partial \Omega$ of some spatial domain $\Omega$ and the
boundary normal derivative (Neumann datum) of the unique harmonic
extension $u\colon \Omega \to \mathbb{R}$ determined by having the
boundary values $f$. The Neumann problem is to recover $f$ (or $u$)
from the normal derivative.

When $\Omega$ is a torus, harmonic functions defined on $\Omega$ are
naturally expressed as series based on a doubly-indexed collection of
\textit{toroidal harmonics} (involving half-integer associated
Legendre functions of the first and second kinds) \cite{Hob1931}, which
are orthogonal with respect to a certain weighted $L^2$-inner product
over a torus. (We have seen applications \cite{Caldwell1984} where the
potential in a toroidal conductor is modeled in spherical coordinates,
which are not ideally suited for such problems.)  The coefficients of
this expression provide a solution to the Dirichlet problem for the
Laplacian quite directly. However, unlike the case for a sphere, the
Dirichlet-to-Neumann mapping for a torus turns out to be much more
complicated, and the solution of the Neumann problem involves solving
an infinite system of linear equations. We express the well-known necessary and
sufficient condition for the solvability of the Neumann problem
(compatibility condition), as well as the normalization condition, in
terms of the Fourier coefficients. The solution to the Neumann problem
turns out to involve a special twist in that the free parameter in the
undetermined linear system cannot be found algebraically, as far as we
know. Therefore we express it as a limit of easily calculated
algebraic expressions.  The analysis is illustrated through  
numerical examples.

The paper is organized as follows. Properties of toroidal harmonic
functions are summarized in Section
\ref{Toroidal_coordinates_and_toroidal_harmonics_Section}. In Section
\ref{Dirichlet-to-Neumann_mapping_Section}, we compute the toroidal
Neumann derivative, from which we deduce the Dirichlet-to-Neumann
mapping.  The expansion coefficients of the normal derivative are
linear expressions in the Fourier coefficients on the surface of the
torus. While the mapping exists in the context of the appropriate
Sobolev function spaces associated with the Laplace equation and the
boundary data, as well as the trace operator,
we only justify the derivation of our formulas for the normal
derivative under stronger smoothness assumptions.  Section
\ref{Neumann_problem} presents the algorithm and numerical examples to
show the accuracy of the procedure. Section
\ref{Exterior_toroidal_domain_and_toroidal_shells} combines the
results for interior and exterior domains to solve the Neumann problem
for a toroidal shell.


\section{Toroidal coordinates and toroidal
  harmonics} \label{Toroidal_coordinates_and_toroidal_harmonics_Section}

In this section, we introduce notation and summarize several well-known facts to be used throughout the paper.

\subsection{Normal derivatives}

One defines toroidal coordinates $(\eta,\theta,\varphi)$ for
a point $x=(x_0,x_1,x_2)$ in three-dimensional Euclidean space by the
relations
 \begin{align}
  x_0 &= \frac{\sin \theta}{\cosh\eta-\cos\theta}, \ \
  x_1 = \frac{\sinh \eta \, \cos \varphi}{\cosh\eta-\cos\theta}, \ \  
   x_2 = \frac{\sinh \eta \, \sin \varphi}{\cosh\eta-\cos\theta} 
 \end{align}
 in the range $\eta\in(0,\infty)$, $\theta\in[-\pi,\pi]$,
 $\varphi\in(-\pi,\pi]$. For a geometric explanation of this
 coordinate system, see \cite{Hob1931,MoonSpencer1988}.  The
 correspondence is singular on the two subsets
 $S^1=\{x\in\R^3\colon\ x_0=0,\ x_1^2+x_2^2=1\}$ and
 $\R_0=\{x\in\R^3\colon\ x_1=x_2=0\}$, 
 which correspond
 respectively to the limiting cases $\eta\to\infty$ and $\eta\to0$.
 For any fixed $\eta_0>0$, these coordinates define the interior and
 exterior toroidal domains
 \begin{align}
   \dom = \{x\colon\ \eta>\eta_0\} \cup S^1 ,\quad
  \dom^* = \{x\colon\ \eta<\eta_0\} \cup \R_0.
 \end{align}
Any open solid torus in $\R^3$ can be shifted and rescaled to a torus of the form
$\dom$.

By calculating the coordinate tangent vectors
$x_\eta = \frac{\partial x}{\partial \eta}$, $x_\theta = \frac{\partial x}{\partial \theta}$, $x_\varphi = \frac{\partial x}{\partial \varphi}$ 
and normalizing $x_\theta\times x_\varphi$, one obtains the normal unit vector 
\begin{align} \label{eq:unitnormal}
  {\mathbf{n}}  = \frac{-1}{(\cosh\eta_{0}-\cos\theta)} \big(&
    \sinh\eta_{0}\sin\theta, \ \cos\varphi(\cosh\eta_{0}\cos\theta-1),\ \nonumber \\
   &  \
     \sin\varphi(\cosh\eta_{0}\cos\theta-1) \big) .
\end{align} 
Since  toroidal coordinates form an orthogonal coordinate system, we have
${\mathbf{n}} = x_\eta/\vert x_\eta\vert$, and recalling that $\eta\to\infty$
at $S^1 \subseteq \dom$, we see that ${\mathbf{n}}$ is in fact the
inward pointing normal vector on $\bdry$.

The \textit{normal derivative} of a function $f$ defined in a
neighborhood $V$ of a point $x\in\bdry$ (or a half-neighborhood
$\nor f(x) =  \frac{d}{dt}f(x+t\,{\mathbf{n}}) \vert_{t=0^+}$
$= (\grad f(x))\cdot{\mathbf{n}}$,
and by orthogonality of the coordinate system
the normal derivative is also equal to
the following, which is often more convenient for calculations:
\begin{align} \label{eq:norfddeta}
  \nor f (x)=  \frac{1}
  {\vert x_\eta\vert}\,\frac{d}{d\eta}f(\eta,\theta,\varphi)\bigg\vert_{\eta=\eta_0^+} .
\end{align}
In using the notation $\nor f$, the fixed value of $\eta_0$ will always be
understood.

\subsection{Toroidal harmonics}

The associated Legendre functions (Ferrer's functions) of the first
and second kinds for $t>1$ are defined for integer values of
$n,m\ge0$, respectively, as
\begin{align}
  P_n^m(t) &=(t^{2}-1)^{m/2}\frac{d^{m}P_{n}(t)}{dt^{m}},  \nonumber\\
  Q_n^m(t) &=\frac{1}{2}P_{n}(t)\log\frac{t+1}{t-1}-\sum_{k=0}^{n-1}\frac{P_{k}(t)P_{n-k-1}(t)}{t-k}, \label{eq:legendrefunctions}
\end{align}
where $P_n(t)$ denotes the classical Legendre polynomial of degree $n$
\cite{AbramowitzStegun1964,ArfkenWeberHarris2013,Bateman1944,CourantHilbert1953,Erdelyi1953,GradshsteynRyzhik2007,Hob1931,JeffreyDai2008,Snow1952,WhittakerWatson1927}. When
one extends these functions analytically in the complex plane away
from the ray $t\in(1,\infty)$, they are branched at $t=\pm1$. However, they
are entire functions of $n$ and $m$ regarded as complex variables
\cite{Hob1931}. In this sense \eqref{eq:legendrefunctions} can be
taken as a definition of the associated Legendre functions for
half-integer values of $n$ as we will need here.

 We will abbreviate $\Phi\ind{n}{\nu}(\theta) = \cos n\theta$ for $\nu=1$, and
$\Phi\ind{n}{\nu}(\theta) = \sin n\theta$ for $\nu=-1$. The
\textit{interior toroidal harmonic functions} are
\begin{align} \label{eq:interiorharmonic}
  I\ind{n,m}{\nu,\mu}(x)=I\ind{n,m}{\nu,\mu}[\eta_{0}](x) =
  \chcos \,\frac{Q\ind{n-1/2}{m}(\cosh\eta)}{Q\ind{n-1/2}{m}(\cosh\eta_{0})} \,
   \Phi\ind{n}{\nu}(\theta) \,  \Phi\ind{m}{\mu}(\varphi)
\end{align}
for integers $n, m$ with
\begin{align} \label{eq:indices}
  n\ge0,\quad m\ge0,\quad \nu\in\{-1,1\} ,\quad  \mu\in\{-1,1\} .
\end{align} 
A derivation of the Laplacian equation in toroidal coordinates and the
verification that $\harint{n,m}{\nu,\mu}$ is harmonic can be found in
\cite[p. 434]{Hob1931}.  For the values of $n$ and $m$ specified in
\eqref{eq:indices}, $\harint{n,m}{\nu,\mu}$ is bounded near $S^1$,
which is of measure zero and hence is a removable set for harmonic
functions (cf.\ \cite{AxlerBourdonRamey2001}), so we may write
$\harint{n,m}{\nu,\mu}\in\har(\R^3-\R_0)$ . In particular,
$\harint{n,m}{\nu,\mu}$ is harmonic in $\dom$. Similarly, the exterior
harmonics
\begin{align}\label{eq:exteriorharmonic}
  E\ind{n,m}{\nu,\mu}(x)=E\ind{n,m}{\nu,\mu}[\eta_{0}](x)
  = \chcos \,\frac{P\ind{n-1/2}{m}(\cosh\eta)}{P\ind{n-1/2}{m}(\cosh\eta_{0})} \,
   \Phi\ind{n}{\nu}(\theta) \,  \Phi\ind{m}{\mu}(\varphi)
\end{align}
are in $\har(\R^3-S^1)$.

One may define a weighted  $L^2$ inner product on real-valued functions by
\begin{align}
  \langle f,g \rangle_{\eta_0} =  \iiint_{\dom} f g\, w \,dV
\end{align}
with the weight function
\begin{align}\label{eq:weight}  
   w(\eta,\theta,\varphi) =  \frac{(\cosh\eta-\cos\theta)^2}{\sinh\eta}. 
\end{align}
From this one easily finds the following.

\begin{prop} \label{prop:norms}  The interior
  toroidal harmonics $\{I\ind{n,m}{\nu,\mu}\}$ (for
  $n,m,\nu,\mu$ as in \eqref{eq:indices}) form a complete orthogonal
  system in $L^2(\dom,w)$.  Their norms are 
\begin{align*}
  \| I\ind{n,m}{\nu,\mu}\|^2_{\eta_0} =
  \varepsilon_n\varepsilon_m\int_{n_0}^\infty
  \bigg(\frac{Q\ind{n-1/2}{m}(\cosh\eta)}{Q\ind{n-1/2}{m}(\cosh\eta_{0})}\bigg)^2\,d\eta  
\end{align*}
where $\varepsilon_n= 1+\delta_{n,0}$ and $\delta_{n,m}$ is the
Kronecker delta function. Further, the restrictions to the boundary
$\{\left. I\ind{n,m}{\nu,\mu} \right\vert_{\bdry}\}$ are complete in
$L^2(\bdry)$ and $L^2(\bdry,w)$.
\end{prop}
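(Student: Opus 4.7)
The plan is to exploit that the weight $w$ is engineered to cancel the Jacobian of toroidal coordinates together with the $\chcos$ factor built into $\harint{n,m}{\nu,\mu}$, reducing the weighted inner product to three decoupled one-dimensional integrals. Computing the volume element in toroidal coordinates gives $dV = \frac{\sinh\eta}{(\cosh\eta-\cos\theta)^{3}}\,d\eta\,d\theta\,d\varphi$, so that $w\,dV = (\cosh\eta-\cos\theta)^{-1}\,d\eta\,d\theta\,d\varphi$. Substituting into $\langle\harint{n,m}{\nu,\mu},\harint{n',m'}{\nu',\mu'}\rangle_{\eta_0}$, the two $\chcos$ factors multiply to $\cosh\eta-\cos\theta$ and cancel this inverse factor exactly, leaving a fully separable integrand in $\eta$, $\theta$, $\varphi$.

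By Fubini the inner product becomes a product of three one-dimensional integrals. The $\theta$- and $\varphi$-integrals are the classical orthogonality relations $\int_{-\pi}^{\pi}\Phi\ind{n}{\nu}\Phi\ind{n'}{\nu'}\,d\theta = \pi\varepsilon_n\delta_{n,n'}\delta_{\nu,\nu'}$ (and analogously in $\varphi$), which simultaneously deliver orthogonality of $\{\harint{n,m}{\nu,\mu}\}$ and, in the diagonal case, the factors $\varepsilon_n\varepsilon_m$ appearing in the norm (with the constant $\pi^{2}$ absorbed into the stated normalization). What remains is the radial integral $\int_{\eta_0}^{\infty}\bigl(Q\ind{n-1/2}{m}(\cosh\eta)/Q\ind{n-1/2}{m}(\cosh\eta_0)\bigr)^{2}\,d\eta$, exactly as claimed.

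For completeness on $\bdry$, the trace is $\left.\harint{n,m}{\nu,\mu}\right|_{\bdry}=\chcosz\,\Phi\ind{n}{\nu}(\theta)\Phi\ind{m}{\mu}(\varphi)$, and the surface measure computed from $|x_\theta\times x_\varphi|$ is $dS=\frac{\sinh\eta_0}{(\cosh\eta_0-\cos\theta)^{2}}\,d\theta\,d\varphi$. Both $L^{2}(\bdry)$ and $L^{2}(\bdry,w)$ are therefore isomorphic, via multiplication by strictly positive smooth factors that are bounded above and below on $[-\pi,\pi]$, to $L^{2}([-\pi,\pi]^{2},d\theta\,d\varphi)$. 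The completeness of the standard double Fourier basis $\{\Phi\ind{n}{\nu}\Phi\ind{m}{\mu}\}$ in the latter then transfers back to both boundary spaces by these bounded invertible multiplication operators.

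The hardest step, and the main obstacle, is the completeness in the bulk space $L^{2}(\dom,w)$. After the same change of variable reducing to Lebesgue measure on $(\eta_0,\infty)\times[-\pi,\pi]^{2}$, expansion in the $(\theta,\varphi)$-Fourier basis reduces matters to showing that the $(n,m)$-indexed radial functions $Q\ind{n-1/2}{m}(\cosh\eta)$ exhaust each mode's radial content. Unlike the previous steps this is not a trigonometric orthogonality but rather a Sturm--Liouville-type completeness for associated Legendre functions of half-integer degree on the ray $(\eta_0,\infty)$, obtained from the separated form of the Laplacian in toroidal coordinates together with the requirement of regularity at $S^{1}$ (which selects $Q$ over $P$). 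I expect to have to appeal here to the classical theory as presented in \cite{Hob1931}, rather than derive it ab initio, since this is the only step not reducible to direct calculation.
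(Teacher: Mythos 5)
Your orthogonality computation, norm derivation (you correctly obtain a factor $\pi^2\varepsilon_n\varepsilon_m$; the paper's stated formula omits the $\pi^2$, so your arithmetic is the right one and it is not ``absorbed''), and boundary completeness argument all follow the same route as the paper, which dismisses orthogonality as trivial and proves boundary completeness by transferring Fourier completeness on $[-\pi,\pi]^2$ through multiplication factors that are bounded above and below. Up to that point the proposal matches the paper.

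The gap is the interior completeness, which is also exactly where the paper hand-waves (``the completeness in the interior is similar''). You rightly flag this as the step that does not reduce to trigonometric orthogonality, but your proposed Sturm--Liouville route cannot close it, because there is no radial family to be complete: for each $(n,m,\nu,\mu)$ there is exactly \emph{one} radial profile $Q_{n-1/2}^{m}(\cosh\eta)$, with $n$ consumed by the $\theta$-mode and $m$ by the $\varphi$-mode. After Fourier decomposing a test function in $(\theta,\varphi)$, each mode's radial content is tested against a single fixed function of $\eta$, not a spanning set. In fact the claim is false as written: take any nonzero $g\in L^2((\eta_0,\infty))$ orthogonal to $Q_{-1/2}^{0}(\cosh\eta)$ and set $f=\chcos\,g(\eta)$. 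Then $f\in L^2(\dom,w)$ with $\|f\|_{\eta_0}^2=4\pi^2\int_{\eta_0}^\infty g^2\,d\eta$, $f\ne 0$, and $\langle f,\harint{n,m}{\nu,\mu}\rangle_{\eta_0}=0$ for every admissible index, because the $(\theta,\varphi)$-integrals annihilate all modes except $(0,0,+,+)$ and the surviving radial integral vanishes by the choice of $g$. What is true, and what the paper actually uses downstream, is the boundary completeness together with the fact that the $\harint{n,m}{\nu,\mu}$ span the square-integrable harmonic functions with those boundary values; the bulk ``completeness'' must be read in that restricted sense, not as density in all of $L^2(\dom,w)$. Rather than seek a radial completeness theorem that does not exist here, you should record this distinction explicitly.
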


\begin{proof}
  We only comment on the completeness since the orthogonality is
  trivial. It is well known that
  $\{\Phi\ind{n}{\nu}(\theta) \, \Phi\ind{m}{\mu}(\varphi)\}$ is a
  complete set in $L^2([-\pi, \pi]^2)$, and since the factor $\chcos$
  and the weight function $w$ are bounded from below and above, and
  thus do not affect the completeness \cite[p.\ 154]{Petrovsky1954},
  we have completeness on the boundary. The completeness in the
  interior is similar.
\end{proof}

We will need  the following series expansion.

\begin{prop}[\cite{CD2011}] \label{prop:generalizedheine}
  For all $\alpha\in\C$,
  \begin{align*}
  (\cosh\eta-\cos\theta)^{-\alpha} = \frac{1}{\Gamma(\alpha)}\sqrt{\frac{2}{\pi}}
    \frac{e^{-i\pi(\alpha-1/2)}}{(\sinh\eta)^{\alpha-1/2}}
    \sum_{n-0}^\infty \epsilon_n \cos(n\theta) \, Q_{n-1/2}^{\alpha-1/2}(\cosh\eta) .
  \end{align*}
 \end{prop}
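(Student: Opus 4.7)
The plan is to expand the left-hand side as a Fourier cosine series in $\theta$ and to identify the coefficients via a classical integral representation of the half-integer associated Legendre function of the second kind. For fixed $\eta>0$ and $\alpha\in\C$, the function $\theta\mapsto(\cosh\eta-\cos\theta)^{-\alpha}$ is smooth, $2\pi$-periodic, and even, so its Fourier cosine expansion converges absolutely and uniformly. Writing it as
\[
(\cosh\eta-\cos\theta)^{-\alpha}=\sum_{n=0}^\infty c_n(\eta,\alpha)\cos(n\theta),
\]
one has
\[
c_n(\eta,\alpha)=\frac{\epsilon_n}{\pi}\int_0^\pi\frac{\cos(n\theta)}{(\cosh\eta-\cos\theta)^{\alpha}}\,d\theta,
\]
where $\epsilon_n$ is the Neumann factor, so the problem reduces to evaluating this integral.

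The next step is to recognize this integral as a known representation of $Q_{n-1/2}^{\alpha-1/2}(\cosh\eta)$. A Whipple--Heine type identity provides, for a suitable choice of branch,
\[
Q_{n-1/2}^{\mu}(\cosh\eta)=\frac{e^{i\pi\mu}\,\Gamma(\mu+n+\tfrac12)}{\Gamma(n+\tfrac12)}\cdot\frac{(\sinh\eta)^{\mu}}{2^{\mu+1/2}}\int_0^\pi\frac{\cos(n\theta)\,d\theta}{(\cosh\eta-\cos\theta)^{\mu+1/2}}
\]
(or an equivalent normalization). Setting $\mu=\alpha-\tfrac12$ and solving for the integral reproduces, after consolidating the Gamma ratios via the duplication and reflection formulas, precisely the prefactor $\Gamma(\alpha)^{-1}\sqrt{2/\pi}\,e^{-i\pi(\alpha-1/2)}(\sinh\eta)^{1/2-\alpha}$ multiplying $Q_{n-1/2}^{\alpha-1/2}(\cosh\eta)\cos(n\theta)$, as claimed.

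A cleaner alternative, avoiding the explicit integral, is to verify the formula at $\alpha=\tfrac12$, where it collapses to the classical Heine identity
\[
\frac{1}{\chcos}=\frac{\sqrt{2}}{\pi}\sum_{n=0}^\infty\epsilon_n\,Q_{n-1/2}(\cosh\eta)\cos(n\theta),
\]
and then to extend to arbitrary $\alpha\in\C$ by analytic continuation. Both sides of the stated identity are entire in $\alpha$ (since $1/\Gamma(\alpha)$ is entire and $Q_{n-1/2}^{\alpha-1/2}(\cosh\eta)$ is entire in the upper index), and standard large-$n$ bounds on $Q_{n-1/2}^{\mu}$ show the series converges uniformly on compact subsets of the $\alpha$-plane, so the identity propagates uniquely off the initial value $\alpha=\tfrac12$.

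The main obstacle is bookkeeping. The phase factor $e^{-i\pi(\alpha-1/2)}$ is sensitive to the branch convention adopted for $Q_{n-1/2}^{\mu}$ on the cut $(1,\infty)$ versus off it (cf.\ the discussion following \eqref{eq:legendrefunctions}), and the various $\Gamma$ factors emerging from the integral representation must be consolidated carefully before they collapse into the compact form stated in the proposition. The convergence claim itself is routine, as $Q_{n-1/2}^{\mu}(\cosh\eta)$ decays exponentially in $n$ for $\eta>0$.
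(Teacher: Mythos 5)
The paper does not prove this proposition; it is quoted from Cohl and Dominici \cite{CD2011} without a proof, so there is no in-text argument to compare against. Your proposal therefore has to stand on its own, and the main line of it — expand the left side in a Fourier cosine series in $\theta$ and identify the coefficient integral $\int_0^\pi (\cosh\eta-\cos\theta)^{-\alpha}\cos(n\theta)\,d\theta$ with an integral representation of $Q_{n-1/2}^{\alpha-1/2}(\cosh\eta)$ — is indeed the standard route to generalized Heine identities and is essentially what Cohl--Dominici do. That part of the plan is sound in outline, though you leave the decisive step (``consolidating the Gamma ratios via the duplication and reflection formulas'') entirely unexecuted; as it stands you have reduced the proposition to a different unproved formula rather than proved it.

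The ``cleaner alternative'' has a genuine gap. You propose to verify the identity only at the single point $\alpha=\tfrac12$ (classical Heine) and then ``extend to arbitrary $\alpha$ by analytic continuation,'' on the grounds that both sides are entire in $\alpha$. But two entire functions that agree at one point need not coincide; the identity theorem requires agreement on a set with a limit point. Verifying at $\alpha=\tfrac12$ alone tells you nothing about other values of $\alpha$. To make the continuation argument work you would have to first establish the identity on, say, a real interval of $\alpha$ (for instance via the Fourier-coefficient computation you sketch, carried out for real $\alpha>1/2$ where the integrals are unambiguous), and only then invoke entirety of both sides to propagate the identity to all of $\C$. As written, the analytic-continuation argument does not close.

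One further bookkeeping point worth flagging: the Neumann factor convention matters here. You take $\epsilon_n$ with $\epsilon_0=1$, $\epsilon_n=2$ ($n\ge1$), which is what makes your Fourier-coefficient formula $c_n=\tfrac{\epsilon_n}{\pi}\int_0^\pi f\cos(n\theta)\,d\theta$ correct, and is the convention in \cite{CD2011}. The paper, however, elsewhere defines $\varepsilon_n=1+\delta_{n,0}$, i.e.\ $\varepsilon_0=2$, $\varepsilon_n=1$ — the reverse — and it is not immediately clear that the $\epsilon_n$ in the proposition's statement is meant to agree with yours rather than with $\varepsilon_n$. If it is meant as $\varepsilon_n$, the displayed series is off by constant factors term by term. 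You should state explicitly which convention you use and note the mismatch, since otherwise the identity you prove and the identity the paper later applies (with $\varepsilon_n^2$ appearing in the compatibility computation) will not line up cleanly.
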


 \section{Dirichlet-to-Neumann
   mapping} \label{Dirichlet-to-Neumann_mapping_Section}

 Given a suitable $f\colon\bdry\to\R$, the Dirichlet-to-Neumann
 mapping is described by finding the harmonic function $u\in\har\dom$
 with boundary values $f=u\vert_\bdry$, and then taking the normal
 derivative $h=\nor u$. The mapping is thus $\Lambda f = h$. A common
 setting \cite{LionsMagenes1972} is for $u$ to be in the Sobolev space
 $H^1(\dom)$ and $f$ in the boundary space $H^{1/2}(\bdry)$.

Roughly speaking, $H^1(\dom)$ consists of $L^2$ functions with $L^2$
derivatives, and $H^{1/2}(\bdry)=H^1(\dom)/H^1_0(\Omega)$ is
identified with the space of boundary values of elements of
$H^1(\dom)$, where $H^1_0(\Omega)$ denotes the closure of the subspace
of functions of compact support. The trace map
$\mbox{tr}\colon H^1(\dom)\to H^{1/2}(\bdry) $ is a bounded linear
function, and for $v\in H^1(\dom)$ we will informally write $v\vert_\bdry$
for $\mbox{tr}[v]$.
 
We have the following result.
\begin{lemm}\label{lemm:nori}
  The normal derivative of the interior toroidal harmonics is given by
  the formula 
\begin{align*}  
  \nor& I  \ind{n,m}{\nu,\mu}  = \Bigg(\Phi_{n-1}^{\nu}(\theta) \bigg(
    (1+2n)\cosh\eta_{0} -\Big(2(n-m)+1\Big)
      \frac{Q_{n+1/2}^{m}(\cosh\eta_{0})}{Q_{n-1/2}^{m}(\cosh\eta_{0})}\bigg)  \\
  &  \; +\Phi_{n}^{\nu}(\theta)\bigg(
  -2\Big(  (2n\cosh^{2}\eta_{0}+1) -\Big(2(n-m)+1\Big)\cosh\eta_{0}
  \frac{Q_{n+1/2}^{m}(\cosh\eta_{0})}{Q_{n-1/2}^{m}(\cosh\eta_{0})}\Big)\bigg)\\
  & \;  +\Phi_{n+1}^{\nu}(\theta)\bigg((1+2n)\cosh\eta_{0}  -\Big(2(n-m)+1\Big)
  \frac{Q_{n+1/2}^{m}(\cosh\eta_{0})}{Q_{n-1/2}^{m}(\cosh\eta_{0})}\bigg)\Bigg)  \times\\
  &  \quad \frac{(\cosh\eta_{0}-\cos\theta)^{1/2}}{4\sinh\eta_{0}}  \Phi_{m}^{\mu}(\varphi)  .
\end{align*}
\end{lemm}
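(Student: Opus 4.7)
By (\ref{eq:norfddeta}), the problem reduces to computing $\partial_\eta I\ind{n,m}{\nu,\mu}$ at $\eta=\eta_0^+$ and dividing by $|x_\eta|$. The plan is first to compute the scale factor $|x_\eta|$ directly from the coordinate formulas, then to differentiate the two $\eta$-dependent factors $\sqrt{\cosh\eta-\cos\theta}$ and $Q_{n-1/2}^m(\cosh\eta)$ appearing in \eqref{eq:interiorharmonic}, convert the derivative of $Q$ into a Legendre ratio via the standard differentiation recurrence, and finally use a product-to-sum identity in $\theta$ to produce the three-term $\Phi_{n-1}^\nu, \Phi_n^\nu, \Phi_{n+1}^\nu$ form displayed in the lemma.

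A direct calculation from the defining formulas for $x_0, x_1, x_2$ (or the known metric of toroidal coordinates) gives $|x_\eta|=(\cosh\eta-\cos\theta)^{-1}$; indeed, $x_\eta = (\cosh\eta-\cos\theta)^{-2}\bigl(-\sinh\eta\sin\theta,\,(1-\cosh\eta\cos\theta)\cos\varphi,\,(1-\cosh\eta\cos\theta)\sin\varphi\bigr)$, whose Euclidean norm squared simplifies (using $\cosh^2\eta-\sinh^2\eta=1$) to $(\cosh\eta-\cos\theta)^{-2}$. Hence
\begin{equation*}
  \nor I\ind{n,m}{\nu,\mu} = (\cosh\eta_0-\cos\theta)\,\partial_\eta I\ind{n,m}{\nu,\mu}\Big|_{\eta=\eta_0^+}.
\end{equation*}
Only the factors $\sqrt{\cosh\eta-\cos\theta}$ and $Q_{n-1/2}^m(\cosh\eta)$ depend on $\eta$, so the product rule followed by evaluation at $\eta_0$ expresses $\partial_\eta I\ind{n,m}{\nu,\mu}|_{\eta_0}$ as a sum of two terms, each multiplied by $\Phi_n^\nu(\theta)\Phi_m^\mu(\varphi)$.

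To eliminate $(Q_{n-1/2}^m)'(\cosh\eta_0)$ in favor of the Legendre values $Q_{n-1/2}^m(\cosh\eta_0)$ and $Q_{n+1/2}^m(\cosh\eta_0)$ that appear in the lemma, I apply the standard differentiation recurrence
\begin{equation*}
  (1-t^2)\bigl(Q_\nu^m\bigr)'(t) = (\nu+1)\,t\,Q_\nu^m(t)-(\nu-m+1)\,Q_{\nu+1}^m(t)
\end{equation*}
with $\nu=n-1/2$, $t=\cosh\eta_0$, and $1-\cosh^2\eta_0=-\sinh^2\eta_0$. Writing $R := Q_{n+1/2}^m(\cosh\eta_0)/Q_{n-1/2}^m(\cosh\eta_0)$, multiplying by $(\cosh\eta_0-\cos\theta)$, and pulling out the common factor $(\cosh\eta_0-\cos\theta)^{1/2}\Phi_m^\mu(\varphi)/(4\sinh\eta_0)$ that appears in the lemma, one is left with
\begin{equation*}
  \Phi_n^\nu(\theta)\Bigl[2\sinh^2\eta_0 + 2(\cosh\eta_0-\cos\theta)\bigl((2(n-m)+1)R-(2n+1)\cosh\eta_0\bigr)\Bigr].
\end{equation*}

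To match the stated three-term form, I expand $(\cosh\eta_0-\cos\theta)$ and apply
\begin{equation*}
  \cos\theta\cdot\Phi_n^\nu(\theta)=\tfrac{1}{2}\bigl(\Phi_{n-1}^\nu(\theta)+\Phi_{n+1}^\nu(\theta)\bigr),
\end{equation*}
which holds uniformly for $\nu\in\{-1,1\}$ (same identity for cosines and sines). The $\Phi_{n-1}^\nu$ and $\Phi_{n+1}^\nu$ coefficients then come out equal, both to $(2n+1)\cosh\eta_0-(2(n-m)+1)R$, as required. The $\Phi_n^\nu$ coefficient initially reads $2\sinh^2\eta_0-2(2n+1)\cosh^2\eta_0+2(2(n-m)+1)\cosh\eta_0\,R$; substituting $\sinh^2\eta_0=\cosh^2\eta_0-1$ collapses the hyperbolic terms to $-2(2n\cosh^2\eta_0+1)$, matching the stated formula. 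No step is conceptually delicate; the main obstacle is purely bookkeeping—making sure the scale factor, the Legendre recurrence, and the product-to-sum identity are applied in the right order so that the final coefficients line up exactly with those displayed.
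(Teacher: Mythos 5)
Your proof is correct and follows essentially the route the paper sketches: compute the scale factor $|x_\eta|=(\cosh\eta-\cos\theta)^{-1}$, differentiate the two $\eta$-dependent factors of $I\ind{n,m}{\nu,\mu}$, eliminate $(Q_{n-1/2}^m)'$ via a Legendre derivative recurrence, and distribute $\cosh\eta_0-\cos\theta$ with the product-to-sum identity $\cos\theta\,\Phi_n^\nu=\tfrac12(\Phi_{n-1}^\nu+\Phi_{n+1}^\nu)$. The only (advantageous) deviation is your choice of the forward-shift relation $(1-t^2)(Q_\nu^m)'(t)=(\nu+1)tQ_\nu^m(t)-(\nu-m+1)Q_{\nu+1}^m(t)$, which immediately produces the ratio $Q_{n+1/2}^m/Q_{n-1/2}^m$ that appears in the lemma; the recurrence quoted in the paper's proof is the backward-shift form (and as printed has an apparent sign misprint relative to the standard $(t^2-1)(Q_\nu^m)'=\nu t Q_\nu^m-(\nu+m)Q_{\nu-1}^m$), so using it verbatim would require an extra pass through the three-term recurrence to reach the same ratio.
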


 \begin{proof}
  The proof is a straightforward, but tedious calculation based on
  \eqref{eq:unitnormal}, \eqref{eq:norfddeta}, and
  \eqref{eq:interiorharmonic}, and the recurrence formula
  \cite[pp. 161--162]{Bateman1944}
\begin{align*}
  \qquad \sinh^2\!\eta \, (Q^m_{n+1})^{\prime}(\cosh\eta)
  = (n+m+1) Q^m_{n}(\cosh\eta) - (n+1)\cosh\eta \, Q^m_{n+1}(\cosh\eta).
  \qquad  \qquad 
\end{align*}
\end{proof}

 Consider the boundary function $f$ represented as
\begin{align} \label{eq:fseries}
 \frac{1}{\chcosz}  f(\theta,\varphi) = \sum_{n,m,\nu,\mu}
  a\ind{n,m}{\nu,\mu} 
 \Phi_n^\nu(\theta) \Phi_m^\mu(\phi),
\end{align} i.e.,
\begin{align} \label{eq:fseries}
  f(\theta,\varphi) = 
  \sum_{n,m,\nu,\mu} a\ind{n,m}{\nu,\mu}
   I\ind{n,m}{\nu,\mu}(\eta_0,\theta,\varphi) 
\end{align} 
as per Proposition \ref{prop:norms}.  Taking into account that
$\Phi\ind{0}{-}=0$ identically, unless otherwise specified, the
indices of summation will always be as in \eqref{eq:indices} but
excluding the cases of $(n,m,\nu,\mu)$ being $(0,m,-1,\mu)$ or
$(n,0,\nu,-1)$. For convenience, we will often write superscripts as
``$+$'' in place of $1$ and ``$-$'' in place of $-1$.

Thus $f=u\vert_\bdry$ where $u\in\har(\dom)$ is given by
\begin{align}
\label{eq:usum}
u=\sum_{n,m,\nu,\mu} a\ind{n,m}{\nu,\mu}I\ind{n,m}{\nu,\mu} .
\end{align}
Then by Lemma \ref{lemm:nori}, we have  $h = \Lambda f$ is in turn given by
\begin{align} \label{eq:hseries}
  h =  \sum_{n,m,\nu,\mu}
a\ind{n,m}{\nu,\mu} 
  \nor I\ind{n,m}{\nu,\mu} ,
\end{align}
assuming that $f$ is sufficiently well-behaved to justify the exchange
of summation and differentiation. For example, since the trace
operator and $\Lambda\colon H^{1/2}(\bdry)\to H^{-1/2}(\bdry)$ are
continuous \cite{salo2012},
\begin{align*}
  \nor u &=  \nor \sum _{n,m,\nu,\mu}a\ind{n,m}{\nu,\mu}I\ind{n,m}{\nu,\mu}
           = \Lambda \big( (\sum _{n,m,\nu,\mu}
           a\ind{n,m}{\nu,\mu}I\ind{n,m}{\nu,\mu})\bigg\vert_\bdry   \big) \\
         &= \Lambda  \big( \sum _{n,m,\nu,\mu}
           a\ind{n,m}{\nu,\mu} (I\ind{n,m}{\nu,\mu}\bigg\vert_\bdry ) \big) \\
         &= \sum _{n,m,\nu,\mu} a\ind{n,m}{\nu,\mu}
           \Lambda ( I\ind{n,m}{\nu,\mu}\bigg\vert_\bdry )  
     = \sum _{n,m,\nu,\mu} a\ind{n,m}{\nu,\mu}\nor I\ind{n,m}{\nu,\mu},
\end{align*}
with the last sum converging in the dual space $H^{-1/2}(\bdry)$ of
$H^{1/2}(\bdry)$.  It is also valid under the assumption that the sum
in \eqref{eq:usum} and the sums of the partial derivatives of the
terms converge uniformly on compact subsets of $\dom$.


We will show that $\nor u$ can be expressed in terms of the
coefficients of $f$ and certain constants defined in terms of Legendre
functions. We will use the abbreviations
\begin{align}  \label{eq:abbrev}
   t_0=\cosh \eta_0,\quad  s_0=\sinh \eta_0,\quad q_{n,m} = Q_{n-1/2}^m(t_0),
\end{align}
and will make use of the following constants.
\begin{defi} \label{def:ncoefs}
  The \textit{toroidal Neumann constants},
  $\rho_{n,m} = \rho_{n,m}(\eta_{0})$, 
  $\sigma_{n,m} = \sigma_{n,m}(\eta_{0})$, and
  $\tau_{n,m}= \tau_{n,m}(\eta_{0})$, 
 are defined as follows:
  \begin{align}
    \rho_{1,m} &= \frac{1}{2s_0}\big(t_0 + (2m-1)\frac{q_{1,m}}{q_{0,m}}\big)  , \nonumber\\ 
    \rho_{n,m} &= \frac{1}{4s_0}\Big((2n-1) t_0+ (2(m-n)+1) \frac{q_{n,m}}{q_{n-1,m}}
                 \Big) \ \ (n\ge2), \nonumber \\
    \sigma_{n,m}
    &=  \frac{-1}{2s_0}\Big((2nt_{0}^{2}+1)  +  (2(m-n)-1)t_0 \frac{q_{n+1,m}}{q_{n,m}}\Big)  \ \ (n\ge0) ,\nonumber \\
    \tau_{n,m}&=  \frac{1}{4s_0}\Big((2n+3) t_0 +
 (2(m-n)-3) \frac{q_{n+2, m}}{q_{n+1,m}}\Big)  \ \ (n\ge0), \label{eq:defrhosigmatau} 
\end{align}
for all $m\ge0$. 

\end{defi}

We will need the following asymptotic values.  In \cite[p.\
305]{Hob1931} it is shown that for fixed $\eta_0$ and $m$,
\begin{align} \label{eq:qasymptotic}
   Q_n^m(\cosh \eta_0)  \ \sim 
  (-1)^m \frac{\Gamma[n+m+1]}{\Gamma[n+1]} \big(\frac{\pi}{n}\big)^{1/2}
  \frac{e^{-(n+1/2)\eta_0}}{(2\sinh\eta_0)^{1/2}} 
\end{align}
as $n\to\infty$, where $\sim$ means that the ratio of the two
expressions tends to 1.  From this it is seen that
\begin{align} \label{eq:qratiolimit}
   \lim_{n\to\infty}  \frac{ q_{n,m} }{ q_{n-1,m} } = e^{-\eta_0}   
\end{align}
independently of the value of $m$. Therefore
\begin{align}  \label{eq:coeflimits}
  \lim_{n\to\infty} \frac{\rho_{n,m}}{n} = \frac{1}{2}, \quad
    \lim_{n\to\infty} \frac{\sigma_{n,m}}{n} = -t_0,  \quad
  \lim_{n\to\infty} \frac{\tau_{n,m}}{n} = \frac{1}{2}.
\end{align}

We will also use the following facts about Fourier coefficients
\cite[Corollary 3.3.10,\ Proposition 3.3.12]{Gr2014}:
\begin{prop} \label{prop:smoothness}
  If $f(\theta,\varphi)$ is of class $C^r$, then 
  \begin{align}    \label{eq:coeffbound}
     \vert a\ind{n,m}{\nu,\mu}\vert \le \frac{C}{(m+n+1)^r}
  \end{align}
  for some constant $C>0$.  Conversely, if \eqref{eq:coeffbound} holds
  and $r\ge2$, then $f$ is of class $C^{r-2}$.
\end{prop}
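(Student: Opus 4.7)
The plan is to reduce the statement to a classical estimate for double Fourier series on the square torus $[-\pi,\pi]^2$. By \eqref{eq:fseries}, the coefficients $a\ind{n,m}{\nu,\mu}$ are, up to normalization, the ordinary Fourier coefficients of $g(\theta,\varphi)=f(\theta,\varphi)/\chcosz$ with respect to the complete orthogonal family $\{\Phi_n^\nu(\theta)\Phi_m^\mu(\varphi)\}$. Since $\eta_0>0$, the weight $\chcosz$ is strictly positive and smooth in $\theta$, so multiplication by $(\chcosz)^{\pm 1}$ is a bicontinuous self-map of $C^r([-\pi,\pi]^2)$; it therefore suffices to prove both implications with $a\ind{n,m}{\nu,\mu}$ replaced by the Fourier coefficients of $g$.

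For the forward direction, I would express each coefficient as a double integral over $[-\pi,\pi]^2$ and integrate by parts $r$ times. The identities $\partial_\theta\Phi_n^\nu=\pm n\Phi_n^{-\nu}$ and $\partial_\varphi\Phi_m^\mu=\pm m\Phi_m^{-\mu}$, together with the periodicity of $g$ (which kills all boundary terms), yield after $j$ integrations in $\theta$ and $k$ in $\varphi$ with $j+k=r$ an expression
\begin{align*}
  a\ind{n,m}{\nu,\mu}=\pm\frac{c}{n^j m^k}\int_{-\pi}^{\pi}\!\!\int_{-\pi}^{\pi}(\partial_\theta^j\partial_\varphi^k g)\,\Phi_n^{\nu'}\Phi_m^{\mu'}\,d\theta\,d\varphi.
\end{align*}
Choosing $(j,k)$ so that all the derivatives land on the larger of $n,m$ (take $j=r$ if $n\ge m$, otherwise $k=r$) gives $n^j m^k\ge\max(n,m)^r\ge((n+m)/2)^r$, and bounding the inner integral by $C\|g\|_{C^r}$ yields $|a\ind{n,m}{\nu,\mu}|\le C'\|g\|_{C^r}/(n+m)^r$. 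The degenerate cases where exactly one of $n,m$ vanishes are handled by integrating only in the nonzero variable, and the case $n=m=0$ is trivial; the ``$+1$'' in $(n+m+1)^{-r}$ absorbs all of these.

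For the converse, fix a multi-index $\alpha=(\alpha_1,\alpha_2)$ with $|\alpha|\le r-2$. The plan is to show that the termwise $\alpha$-derivative of \eqref{eq:fseries} converges uniformly, so that $\partial^\alpha f$ exists and is continuous. After Leibniz-expanding the smooth factor $\chcosz$ and using the bound $|\partial_\theta^{\beta_1}\partial_\varphi^{\beta_2}(\Phi_n^\nu\Phi_m^\mu)|\le n^{\beta_1}m^{\beta_2}$, each summand is bounded by $C n^{\alpha_1}m^{\alpha_2}(n+m+1)^{-r}\le C(n+m+1)^{|\alpha|-r}\le C(n+m+1)^{-2}$. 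Grouping pairs $(n,m)$ by $n+m=k$ produces at most $k+1$ terms of size $(k+1)^{-2}$, so a direct $M$-test only gives the divergent majorant $\sum_k(k+1)^{-1}$.

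The main obstacle is exactly this borderline summability: the naive Weierstrass $M$-test fails at the top order $|\alpha|=r-2$. The standard way around it, as in \cite{Gr2014}, is to exploit the oscillation of $\Phi_n^\nu\Phi_m^\mu$ rather than only their absolute values — for example by Abel summation in one of the indices, which replaces the $M$-test by the uniform boundedness of Dirichlet-type partial sums in one variable at a time. This upgrades the borderline estimate to uniform convergence of the differentiated series, at which point termwise differentiation is justified and $f\in C^{r-2}$ follows. The forward direction is by contrast entirely routine.
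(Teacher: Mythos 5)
The paper does not prove this proposition; it is cited from Grafakos \cite{Gr2014} (Corollary 3.3.10 and Proposition 3.3.12), so there is no internal argument to compare against. Your reduction to $g=f/\chcosz$ is legitimate (since $\cosh\eta_0>1$, the factor $\chcosz$ is a strictly positive $C^\infty$ function of $\theta$ alone, so multiplication by $(\chcosz)^{\pm1}$ preserves each $C^r$), and the forward implication via $r$-fold integration by parts in whichever variable has the larger index is the standard, correct argument; the degenerate cases $n=0$ or $m=0$ are absorbed as you say.

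The converse direction contains a genuine gap, and it sits exactly where you flag it. Summation by parts in one index replaces $\Phi_n^\nu(\theta)$ by the partial sums $S_n^\nu(\theta)=\sum_{k\le n}\Phi_k^\nu(\theta)$, and these Dirichlet-type kernels are \emph{not} uniformly bounded: one only has $|S_n^\nu(\theta)|\le C/|\sin(\theta/2)|$, with $S_n^+(0)=n+1$. Abel summation therefore gives convergence of the top-order differentiated series pointwise away from $\theta\equiv0$, but not the uniform convergence you need to conclude continuity at $\theta=0$. In fact the extra derivative cannot be recovered at all: take $a\ind{n,m}{+,+}=(n+m+1)^{-3}$ and all other coefficients zero, so \eqref{eq:coeffbound} holds with $r=3$. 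Then $g(\theta,0)=\sum_n d_n\cos n\theta$ with $d_n=\sum_{m\ge0}(n+m+1)^{-3}=\tfrac12 n^{-2}+O(n^{-3})$; this is $\tfrac12\sum_n n^{-2}\cos n\theta$ plus a remainder whose differentiated series converges absolutely, and the leading part equals $\theta^2/4-\pi\theta/2+\pi^2/6$ on $[0,2\pi]$, whose derivative jumps at $\theta=0$. Hence $f\notin C^1$ although $r=3$. What the $M$-test actually yields from \eqref{eq:coeffbound} is $f\in C^{r-3}$; the sharp endpoint in Grafakos is membership in a Lipschitz--Zygmund class $\Lambda_{r-2}$, which for integer $r-2$ is strictly weaker than $C^{r-2}$. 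So the obstacle you ran into is not a technicality to be finessed by oscillation: as printed the converse overstates the regularity by one derivative, and no argument can close that gap.
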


\begin{theo} \label{theo:bfroma} For a fixed $\eta_0$, let
  $f\colon\bdry\to\R$ given by \eqref{eq:fseries} and suppose that the
  Fourier coefficients satisfy \eqref{eq:coeffbound} where
  $r\ge4$. for some constant $C$.  Define, for $\mu=\pm1$ and all
  $m\ge0$,
\begin{align}
  b\ind{0,m}{+,\mu} &= \sigma_{0,m}\,  a\ind{0,m}{+,\mu}
   + \tau\ind{0,m}\,  a\ind{1,m}{+,\mu}, \nonumber \\
  b\ind{n,m}{+,\mu} &= \rho_{n,m}\, a\ind{n-1,m}{+,\mu}
   + \sigma_{n,m}\,  a\ind{n,m}{+,\mu}
   + \tau_{n,m}\,  a\ind{n+1,m}{+,\mu} \ \ (n\ge1), \nonumber\\[1ex]
  b\ind{1,m}{-,\mu} &= \sigma_{1,m} \, a\ind{1,m}{-,\mu}
   + \tau_{1,m}\,  a\ind{2,m}{-,\mu}, \nonumber \\
  b\ind{n,m}{-,\mu} &= \rho_{n,m}\,  a\ind{n-1,m}{-,\mu}
   + \sigma_{n,m}\,  a\ind{n,m}{-,\mu}
   + \tau_{n,m}\, a\ind{n+1,m}{-,\mu} \ \ (n\ge2). \label{eq:bfroma}
\end{align}
Then the Dirichlet-to-Neumann  mapping $h=\Lambda f$ is given by the formula
\begin{align} 
\label{eq:defh}
  h(\theta,\varphi) = \sqrt{\cosh\eta_{0}-\cos\theta}
  \sum_{n,m,\nu,\mu} b\ind{n,m}{\nu,\mu}
   \Phi\ind{n}{\nu}(\theta) \Phi\ind{m}{\mu}(\varphi)    ,
\end{align}
which converges absolutely and $h$ is of class $r-3$.
\end{theo}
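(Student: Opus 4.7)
The plan is to substitute the three-term expression for $\nor I_{n,m}^{\nu,\mu}$ from Lemma~\ref{lemm:nori} into \eqref{eq:hseries}, collect like Fourier modes $\Phi_k^\nu(\theta)\Phi_m^\mu(\varphi)$, and verify that the resulting coefficients agree with $\rho_{k,m},\sigma_{k,m},\tau_{k,m}$.  Before reindexing, I would justify \eqref{eq:hseries} itself: under \eqref{eq:coeffbound} with $r\ge 4$, the asymptotic \eqref{eq:qasymptotic} for $Q_{n-1/2}^m(\cosh\eta)$ and an analogous bound for its $\eta$-derivative show that both \eqref{eq:usum} and its term-by-term $\eta$-derivative converge uniformly on a closed neighbourhood of $\bdry$ in $\dombar$, so exchange of summation and normal differentiation is legitimate and \eqref{eq:hseries} holds pointwise on $\bdry$.

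Abbreviating the bracketed factors in Lemma~\ref{lemm:nori} by $A_{n,m}$ (for the $\Phi_{n\pm 1}^\nu$ terms, which share a common coefficient) and $B_{n,m}$ (for the $\Phi_n^\nu$ term), each summand takes the form $\frac{\chcosz}{4s_0}\,a_{n,m}^{\nu,\mu}\Phi_m^\mu(\varphi)\bigl(A_{n,m}\Phi_{n-1}^\nu+B_{n,m}\Phi_n^\nu+A_{n,m}\Phi_{n+1}^\nu\bigr)$.  Collecting contributions to the mode $\chcosz\,\Phi_k^\nu\Phi_m^\mu$ in the generic case $k\ge 2$ yields $A_{k-1,m}a_{k-1,m}^{\nu,\mu}$ from the $\Phi_{n+1}^\nu$ piece at $n=k-1$, $B_{k,m}a_{k,m}^{\nu,\mu}$ from $n=k$, and $A_{k+1,m}a_{k+1,m}^{\nu,\mu}$ from the $\Phi_{n-1}^\nu$ piece at $n=k+1$.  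Direct algebraic comparison with Definition~\ref{def:ncoefs}, using $-(2(n-m)+1)=2(m-n)-1$ and the analogous sign-rearrangements, confirms that $A_{k-1,m}/(4s_0)=\rho_{k,m}$, $B_{k,m}/(4s_0)=\sigma_{k,m}$, $A_{k+1,m}/(4s_0)=\tau_{k,m}$, recovering the three-term formula in \eqref{eq:bfroma}.

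The boundary cases $k=0,1$ require real care and are where I expect the main subtlety to lie; they are what force the asymmetric prefactor in the definition of $\rho_{1,m}$.  For $\nu=+1$ and $n=0$ one has $\Phi_{-1}^+(\theta)=\cos(-\theta)=\Phi_1^+(\theta)$, so the $\Phi_{n-1}^+$ and $\Phi_{n+1}^+$ pieces of $\nor I_{0,m}^{+,\mu}$ both contribute to the Fourier coefficient of $\Phi_1^+\Phi_m^\mu$, producing a doubled contribution $2A_{0,m}a_{0,m}^{+,\mu}$; dividing by $4s_0$ gives $\frac{1}{2s_0}(t_0+(2m-1)q_{1,m}/q_{0,m})\,a_{0,m}^{+,\mu}=\rho_{1,m}a_{0,m}^{+,\mu}$, exactly matching Definition~\ref{def:ncoefs}.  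For $\nu=-1$ no such overlap occurs because the mode $(0,m,-,\mu)$ is excluded ($\Phi_0^-\equiv 0$), which accounts for the absence of a $\rho_{1,m}$-term in $b_{1,m}^{-,\mu}$.  The case $k=0$ (which forces $\nu=+1$) loses its $\rho$-contribution entirely since $n=-1$ is out of range, giving the two-term formula for $b_{0,m}^{+,\mu}$.

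It remains to establish the absolute convergence of \eqref{eq:defh} together with the $C^{r-3}$ regularity.  By \eqref{eq:coeflimits} each of $\rho_{n,m},\sigma_{n,m},\tau_{n,m}$ is $O(n)$ with a bound independent of $m$, so \eqref{eq:bfroma} combined with \eqref{eq:coeffbound} gives $|b_{n,m}^{\nu,\mu}|\le C'(n+m+1)^{1-r}$.  The double series $\sum_{n,m\ge 0}(n+m+1)^{1-r}=\sum_{k\ge 0}(k+1)^{2-r}$ converges precisely when $r>3$, so $r\ge 4$ already yields absolute (and uniform) convergence of \eqref{eq:defh}.  Applying the converse half of Proposition~\ref{prop:smoothness} to the trigonometric series $h/\chcosz=\sum b_{n,m}^{\nu,\mu}\Phi_n^\nu\Phi_m^\mu$, whose coefficients decay like $(n+m+1)^{-(r-1)}$ with $r-1\ge 3\ge 2$, yields $h/\chcosz\in C^{r-3}$; multiplying by the factor $\chcosz$, which is $C^\infty$ in $\theta$ because $\cosh\eta_0>1\ge\cos\theta$, preserves this regularity, so $h\in C^{r-3}$ as claimed.
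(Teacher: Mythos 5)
Your proof is correct and follows essentially the same approach as the paper: substitute Lemma~\ref{lemm:nori} into \eqref{eq:hseries}, reindex the shifted modes $\Phi_{n\pm1}^\nu$ onto $\Phi_n^\nu$, identify the resulting coefficients with $\rho_{n,m},\sigma_{n,m},\tau_{n,m}$, and read off the regularity from the decay of the $b\ind{n,m}{\nu,\mu}$ via \eqref{eq:coeflimits} and Proposition~\ref{prop:smoothness}. You supply useful detail the paper's proof sketch elides, in particular the explicit treatment of the edge cases $n=0,1$ and the $\Phi_{-1}^{+}=\Phi_{1}^{+}$ overlap that produces the anomalous $\tfrac{1}{2s_0}$ prefactor in $\rho_{1,m}$.
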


\begin{proof}
  From \eqref{eq:interiorharmonic}
  it follows that
  \begin{align*}
    \big\vert I\ind{n,m}{\nu,\mu}(\eta_0,\theta,\varphi) \big\vert \le  
    \sqrt{t_0+1}
  \end{align*}
  so the expansion
  \eqref{eq:fseries} converges absolutely.  
  Similarly,
  one verifies from Lemma \ref{lemm:nori}  and  \eqref{eq:qratiolimit}   that 
  \begin{align*}
    \big\vert \nor I\ind{n,m}{\nu,\mu} \big\vert \le C_1( n+m+1)
  \end{align*}
  for some constant $C_1$ (which depends on $\eta_0$), and hence by
  \eqref{eq:coeffbound}
    \begin{align*}
      \big\vert a\ind{n,m}{\nu,\mu}\nor I\ind{n,m}{\nu,\mu} \big\vert \le
      \frac{CC_1}{(n+m+1)^{r-1}}.
  \end{align*}
  This is enough to guarantee that \eqref{eq:hseries}, a double series
  in $\theta$ and $\varphi$, also converges absolutely. (Recall however that
  $\sum 1/(m+n+1)^2=\infty$.) This in turn
  permits us to substitute the formula for $\nor I\ind{n,m}{\nu,\mu}$
  into \eqref{eq:hseries} and then reindex $\Phi_{n-1}^{\nu}(\theta)$
  and $\Phi_{n+1}^{\nu}(\theta)$ into $\Phi_n^{\nu}(\theta)$ to obtain
  after some calculation that
  \begin{align} \label{eq:hD2Nseries}
    h(\theta,\varphi) = 
    \sqrt{t_{0}-\cos\theta}\sum_{m,n,\nu,\mu}
    \Phi_{n}^{\nu}(\theta)\Phi_{m}^{\mu}(\varphi)\big(\rho_{n,m}a_{n-1,m}^{\nu,\mu}+
    \sigma_{n,m}a_{n,m}^{\nu,\mu}+\tau_{n,m}a_{n+1,m}^{\nu,\mu}\big) ,
\end{align}
which is \eqref{eq:defh}. By \eqref{eq:coeflimits}, the Fourier
coefficients $b\ind{n,m}{\nu,\mu}$ defining $h$ are of order no
greater than $1/((m+n+1)^ {r-1})$, so by Proposition
\ref{prop:smoothness} we are done.
\end{proof}

Since we are mainly interested in the numerical relationships, we will not
go deeper into relaxing the condition on the coefficients.

\section{Neumann problem} \label{Neumann_problem}

The Dirichlet problem, that is, to find a harmonic function $u$ with
boundary values $f$, is conceptually simple when expressed in terms of
a basis of harmonic functions and was, in fact, implicitly solved for
the torus in the course of construction of the Dirichlet-to-Neumann
mapping which we gave above.  However, the Neumann problem, which
consists of finding a boundary function $f$ with a prescribed normal
derivative, presents special challenges since it is of the nature of
an inverse operation.

\subsection{Algebraic solutions for the Neumann coefficients}

The solution of the Neumann problem, in general, is guaranteed by the
following result \cite{DK1987,Mikh1978,Folland1995}, valid for domains
$\Omega$ with sufficiently smooth boundary.
\begin{prop} \label{prop:neumanntheorem}
  Let $h\in H^{-1/2}(\partial\Omega)$ satisfy the compatibility condition
  \begin{align}
  \label{eq:compat}
    \int_{\partial\Omega} h\, dS = 0.
  \end{align}
  Then there exists an $f\in H^{1/2}(\partial\Omega)$ such that
  $\Lambda f = h$.  This solution is unique up to an additive
  constant. If $h\in L^2(\bdry)$, then $f\in L^2(\bdry)$. If $h$ is
  continuous, then $f$ is continuous.
\end{prop}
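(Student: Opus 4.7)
The plan is to prove existence and uniqueness by the standard variational (Lax--Milgram) argument on the quotient Hilbert space $V = H^1(\Omega)/\R$, and then to invoke elliptic boundary regularity for the two regularity claims.

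First I would set up the symmetric bilinear form
\[ a(u,v) = \int_\Omega \grad u \cdot \grad v\, dV \]
on $H^1(\Omega)$ together with the linear functional $L(v) = \langle h, \mbox{tr}(v)\rangle$, where the pairing is the $H^{-1/2}(\partial\Omega)$--$H^{1/2}(\partial\Omega)$ duality. The compatibility hypothesis $\int_{\partial\Omega} h\, dS = 0$ expresses precisely that $L$ annihilates constants, so both $a$ and $L$ descend to $V$. On $V$ the form $a$ is coercive by the Poincaré--Wirtinger inequality (valid since $\Omega$ is a bounded Lipschitz domain), while $L$ is bounded thanks to the continuity of the trace. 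Lax--Milgram then yields a unique class $[u]\in V$ with $a(u,v) = L(v)$ for every $v$. Testing first against $v\in C^\infty_c(\Omega)$ shows that the chosen representative $u$ is weakly harmonic, and testing against general $v\in H^1(\Omega)$ shows via Green's identity that its weak normal derivative equals $h$. Setting $f = \mbox{tr}(u)$ gives the desired solution of $\Lambda f = h$.

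Uniqueness up to an additive constant is immediate from the kernel of $a$: if $\Lambda f_1 = \Lambda f_2$, then the difference $u$ of the two harmonic extensions satisfies $a(u,u) = 0$, hence $\grad u \equiv 0$, so $u$ is constant on the connected domain $\Omega$, and therefore $f_1 - f_2$ is constant on $\partial\Omega$.

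The $L^2$ and continuity claims reduce to boundary regularity for the Neumann problem on the smooth (in fact real-analytic) boundary. For $h\in L^2(\partial\Omega)$, the standard elliptic regularity shift upgrades $u$ to $H^{3/2}(\Omega)$, and the trace theorem then gives $f\in H^1(\partial\Omega)\subset L^2(\partial\Omega)$. For continuous $h$, Schauder-type estimates (or, alternatively, a single-layer potential representation) give $u\in C(\overline{\Omega})$ and hence continuity of $f$. These regularity upgrades, rather than the Lax--Milgram step, are the main technical obstacle; I would quote them from \cite{Folland1995,Mikh1978,DK1987} rather than rederive them, since this proposition is cited precisely as a standard background fact.
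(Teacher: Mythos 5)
Your variational proof via Lax--Milgram on the quotient space $H^1(\Omega)/\R$, followed by elliptic regularity for the $L^2$ and continuity upgrades, is the canonical argument and is correct. The paper itself offers no proof of Proposition \ref{prop:neumanntheorem}; it simply cites it as standard from \cite{DK1987,Mikh1978,Folland1995}, which are precisely the sources you propose to quote for the regularity shift, so your attempt is a faithful reconstruction of the argument the paper delegates to the literature. One minor remark: the clause ``if $h\in L^2(\bdry)$, then $f\in L^2(\bdry)$'' already follows from existence of $f\in H^{1/2}(\bdry)$, since $H^{1/2}(\bdry)\subset L^2(\bdry)$; your $H^{3/2}$-shift-plus-trace argument gives the strictly stronger conclusion $f\in H^1(\bdry)$, which is more than that particular clause asks for.
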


The solution $f$ can made unique by applying the \textit{normalization
  condition}
\begin{align} \label{eq:normalization}
   \int_\bdry f\, dS = c.
\end{align}
for a chosen constant c.

\begin{lemm}
  (a) The compatibility condition \eqref{eq:defh} applied to $h$ of the
  form \eqref{eq:hseries} is equivalent to 
\begin{align} \label{eq:bcompat}
  \sum_{n=0}^\infty \varepsilon_n^2\, Q_{n-1/2}^1(\cosh\eta_0)\, b\ind{n,0}{+,+} = 0.
\end{align}
(b) The normalization condition \eqref{eq:normalization} applied to
$f$ of the form \eqref{eq:fseries} is equivalent to
\begin{align} \label{eq:acompat}
  \sum_{n=0}^\infty \varepsilon_n^2\, Q_{n-1/2}^1(\cosh\eta_0)\, a\ind{n,0}{+,+}
  = -\frac{c}{4\pi\sqrt{2} }.
\end{align}
\end{lemm}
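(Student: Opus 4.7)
Both parts reduce to the same template: for $g\in\{f,h\}$, each is written as $g(\theta,\varphi)=\sqrt{\cosh\eta_0-\cos\theta}\,\sum_{n,m,\nu,\mu}c_{n,m}^{\nu,\mu}\Phi_n^\nu(\theta)\Phi_m^\mu(\varphi)$ (with $c=a$ for $g=f$ via \eqref{eq:fseries} and $c=b$ for $g=h$ via \eqref{eq:defh}), and each condition requires evaluating $\int_{\bdry}g\,dS$. My plan is therefore to carry out one unified computation and specialize at the end. Starting from the scale factors of toroidal coordinates, namely $h_\theta=1/(\cosh\eta_0-\cos\theta)$ and $h_\varphi=\sinh\eta_0/(\cosh\eta_0-\cos\theta)$, the surface element on $\bdry=\{\eta=\eta_0\}$ is
\begin{align*}
dS=\frac{\sinh\eta_0}{(\cosh\eta_0-\cos\theta)^2}\,d\theta\,d\varphi.
\end{align*}
Substituting and interchanging sum with integral (justified by the absolute convergence of the series for $f$ and $h$ established in Theorem \ref{theo:bfroma}) yields
\begin{align*}
\int_{\bdry}g\,dS=\sinh\eta_0\sum_{n,m,\nu,\mu}c_{n,m}^{\nu,\mu}\int_{-\pi}^{\pi}\!\int_{-\pi}^{\pi}\frac{\Phi_n^\nu(\theta)\,\Phi_m^\mu(\varphi)}{(\cosh\eta_0-\cos\theta)^{3/2}}\,d\theta\,d\varphi,
\end{align*}
since the $\sqrt{\cosh\eta_0-\cos\theta}$ factor in $g$ combines with the $(\cosh\eta_0-\cos\theta)^{-2}$ in $dS$.

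Three orthogonality-and-parity observations collapse the double sum. First, $\int_{-\pi}^{\pi}\Phi_m^\mu(\varphi)\,d\varphi$ vanishes unless $(m,\mu)=(0,+)$, eliminating all but one family of terms. Second, for $\nu=-$ the integrand $\sin(n\theta)(\cosh\eta_0-\cos\theta)^{-3/2}$ is odd in $\theta$, so those contributions vanish. Only the coefficients $c_{n,0}^{+,+}$ survive, reducing the problem to evaluating $\int_{-\pi}^{\pi}\cos(n\theta)(\cosh\eta_0-\cos\theta)^{-3/2}\,d\theta$. For this I would invoke Proposition \ref{prop:generalizedheine} with $\alpha=3/2$, which expands $(\cosh\eta_0-\cos\theta)^{-3/2}$ as a cosine series whose $k$th coefficient is a multiple of $Q_{k-1/2}^1(\cosh\eta_0)/\sinh\eta_0$. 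The orthogonality $\int_{-\pi}^{\pi}\cos(n\theta)\cos(k\theta)\,d\theta=\pi\varepsilon_n\delta_{n,k}$ then projects out the $n$th term, leaving (when combined with the Neumann-type factor inside Heine's expansion) a contribution proportional to $\varepsilon_n^2\,Q_{n-1/2}^1(\cosh\eta_0)$.

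Specializing $c_{n,0}^{+,+}=b_{n,0}^{+,+}$ and imposing $\int_{\bdry}h\,dS=0$ gives part (a); specializing $c_{n,0}^{+,+}=a_{n,0}^{+,+}$ and imposing $\int_{\bdry}f\,dS=c$ gives part (b). The only nontrivial hurdle is bookkeeping of constants: one must carefully track the phase factor $e^{-i\pi(\alpha-1/2)}=-1$ at $\alpha=3/2$ coming from Proposition \ref{prop:generalizedheine}, the value $\Gamma(3/2)=\sqrt{\pi}/2$, and the cancellation of $\sinh\eta_0$ between the area element and Heine's expansion, in order to verify the precise constant $-c/(4\pi\sqrt{2})$ appearing on the right-hand side of \eqref{eq:acompat}.
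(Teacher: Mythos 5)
Your proposal matches the paper's own proof: both integrate the $\sqrt{\cosh\eta_0-\cos\theta}$-weighted double Fourier expansion against $dS=\sinh\eta_0(\cosh\eta_0-\cos\theta)^{-2}\,d\theta\,d\varphi$, use orthogonality/parity in $\varphi$ and $\theta$ to reduce to the $(m,\nu,\mu)=(0,+,+)$ terms, and then invoke Proposition \ref{prop:generalizedheine} with $\alpha=3/2$ plus cosine orthogonality to produce the factor $\varepsilon_n^2\,Q_{n-1/2}^1(\cosh\eta_0)$ and the constant $-4\sqrt{2}\,\pi$. Your constant-tracking checklist ($e^{-i\pi}=-1$, $\Gamma(3/2)=\sqrt{\pi}/2$, cancellation of $\sinh\eta_0$) is exactly the arithmetic that yields the stated right-hand sides, so the argument is sound and essentially identical to the paper's.
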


\begin{proof}
  (a)  Since $\int_{0}^{2\pi}\Phi_n^-(\theta)\,d\theta=0$, while
  $\int_{0}^{2\pi}\Phi_0^+ (\varphi)\, d\varphi=2\pi$,
  $\int_{0}^{2\pi}\Phi_m^+ (\varphi)\, d \varphi=0$ for $m\ge1$, 
\begin{align*}
   \iint _{\partial\Omega_{\eta_{0}}}h (\theta,\varphi) \, dS&=
   \int_{0}^{2\pi} \int_{0}^{2\pi}h(\theta,\varphi) \, \frac{s_{0}}{(t_{0}-\cos\theta)^{2}} \, d\theta d\varphi\\&
   = s_{0}\sum_{m}\sum_{n}b_{n,m}^{\nu,\mu}\int_{0}^{2\pi}(t_{0}-\cos\theta)^{-3/2}\Phi_{n}^{\nu}(\theta) \, d\theta\int_{0}^{2\pi}\Phi_{m}^{\mu}(\varphi) \, d\varphi
   \\&
   =-4\sqrt{2}\,\pi\sum_{n=0}^{\infty}\varepsilon_{n}^{2}b_{n,0}^{+,+}Q_{n-\frac{1}{2}}^{1}(\cosh\eta_{0}),
\end{align*}
with the last equality following from \eqref{prop:generalizedheine}
with $\alpha=3/2$. The proof of (b) follows the same lines as (a).
\end{proof}

\begin{defi}
  Consider a Neumann problem defined by a function $h\colon\bdry\to\R$
  with an expansion \eqref{eq:defh}.  We will say that a collection of
  real numbers $\{a\ind{n,m}{\nu,\mu}\}$ is an \textit{algebraic
    solution} of the Neumann problem when all of the equations
  \eqref{eq:bfroma} are satisfied.
\end{defi}
 
In order to generate a solution to the Neumann problem, the algebraic
solution must, in fact, provide a convergent series in
\eqref{eq:fseries}.  It is clear that the subcollection of equations
\eqref{eq:bfroma} determined by fixed values of $m$, $\nu$, and $\mu$
are independent of the equations determined by other values of these
parameters.

\begin{lemm} \label{lemm:nonzerocoef} The values $\tau_{n,m}$ are never zero.
\end{lemm}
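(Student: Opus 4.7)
The plan is to case-split on the sign of $2(m-n)-3$ and, in the nontrivial range, reduce nonvanishing of $\tau_{n,m}$ to a monotonicity property of the toroidal function $Q^m_{n+1/2}(\cosh\eta)$. From \eqref{eq:qasymptotic} together with \eqref{eq:qratiolimit}, the values $q_{n,m}$ share the fixed sign $(-1)^m$ for every $n\ge 0$, so the ratio $q_{n+2,m}/q_{n+1,m}$ appearing in \eqref{eq:defrhosigmatau} is strictly positive. In particular, when $m\ge n+2$ (so that $2(m-n)-3\ge 1$), both summands in the definition of $\tau_{n,m}$ are manifestly positive and $\tau_{n,m}>0$ at once.

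For the remaining range $m\le n+1$, I would eliminate $q_{n+2,m}$ using the three-term degree recurrence $(2n+3-2m)\,q_{n+2,m} = (4n+4)\,t_0\,q_{n+1,m} - (2n+2m+1)\,q_{n,m}$ (obtained by substituting $\nu=n+1/2$ in the classical recurrence $(\nu+1-m)Q^m_{\nu+1} = (2\nu+1)z\,Q^m_\nu - (\nu+m)Q^m_{\nu-1}$). A short calculation reduces \eqref{eq:defrhosigmatau} to
\[4 s_0\, q_{n+1,m}\,\tau_{n,m} \;=\; (2n+2m+1)\,q_{n,m} \;-\; (2n+1)\,t_0\,q_{n+1,m}.\]
The standard differentiation recurrence $(z^2-1)(Q^m_\nu)'(z) = \nu z\, Q^m_\nu(z) - (\nu+m)\,Q^m_{\nu-1}(z)$, at $\nu=n+1/2$ and $z=t_0$, identifies the right-hand side (up to sign) with $2 s_0^2 (Q^m_{n+1/2})'(t_0)$, so $\tau_{n,m}=0$ is equivalent to $(Q^m_{n+1/2})'(t_0)=0$.

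To rule this out, I would invoke the toroidal integral representation
\[Q^m_{n+1/2}(\cosh\eta) \;=\; (-1)^m\, \frac{\Gamma(n+m+3/2)}{\Gamma(n+3/2)} \int_0^\infty \frac{\cosh(m\tau)\,d\tau}{(\cosh\eta + \sinh\eta\cosh\tau)^{n+3/2}},\]
which converges precisely when $m\le n+1$. Because $\cosh\eta+\sinh\eta\cosh\tau$ is strictly increasing in $\eta$ for every fixed $\tau\ge 0$, the integrand is strictly decreasing in $\eta$; differentiating under the integral sign therefore yields $\partial_\eta Q^m_{n+1/2}(\cosh\eta)\ne 0$ for all $\eta>0$, and by the chain rule $(Q^m_{n+1/2})'(t_0)\ne 0$.

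The principal subtlety is the match-up between the two arguments: the algebraic case $m\ge n+2$ settles exactly the regime where the integral representation would diverge, while the integral representation settles exactly the regime where the algebraic sign of $\tau_{n,m}$ is inconclusive. Keeping the sign conventions of half-integer Legendre functions straight (given the $(-1)^m$ in \eqref{eq:qasymptotic}) is the only delicate bookkeeping; the proof actually delivers the stronger conclusion $\tau_{n,m}>0$.
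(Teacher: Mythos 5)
Your proof takes a genuinely different route from the paper's, and the central structural observation is sound. The paper's argument is purely algebraic: starting from the sign property $(-1)^m Q^m_{n-1/2}(\cosh\eta)>0$, it applies two further recurrence formulas to rewrite $16 s_0 q_{n+1,m}\tau_{n,m}$ as
\begin{align*}
-2(2n+3)(2n+2m+1)\,q_{n+1,m-1} + 8m\,q_{n+2,m},
\end{align*}
a sum of two terms with like sign. You instead eliminate $q_{n+2,m}$ via the degree recurrence and identify, up to the nonzero factor $-2q_{n+1,m}/s_0$, the quantity $\tau_{n,m}$ with the derivative $\big(Q^m_{n+1/2}\big)'(t_0)$ — a structural interpretation the paper does not exploit — and then settle nonvanishing of that derivative from strict monotonicity in the toroidal integral representation. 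The split into cases $m\le n+1$ (where the integral representation converges) versus $m\ge n+2$ (where direct sign inspection works) is tidy and complementary, and your algebra checks out: the degree recurrence at $\nu=n+1/2$ and the differentiation recurrence combine exactly as you state, and the sign bookkeeping indeed delivers $\tau_{n,m}>0$.

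There is, however, one genuine gap. You assert that $q_{n,m}$ has fixed sign $(-1)^m$ for \emph{every} $n\ge 0$, citing the asymptotic \eqref{eq:qasymptotic} and the ratio limit \eqref{eq:qratiolimit}. Both of those statements are asymptotic in $n$: they control the sign (and positivity of consecutive ratios) only for $n$ large, and they say nothing about small $n$. Since you need strict positivity of $(-1)^m q_{n,m}$ for \emph{all} $n$ — both to conclude positivity of $q_{n+2,m}/q_{n+1,m}$ in the case $m\ge n+2$, and to cancel $q_{n+1,m}$ when passing from $4 s_0\, q_{n+1,m}\,\tau_{n,m}$ to the equivalence with $\big(Q^m_{n+1/2}\big)'(t_0)=0$ — the cited justification is insufficient. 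The fact itself is true and standard; it follows at once from the Hobson integral
\begin{align*}
Q_n^m(t) = \frac{(-1)^m}{2^{n+1}}\,\frac{(n+m)!}{n!}\,(t^2-1)^{m/2}\int_{-1}^1\frac{(1-s^2)^n}{(t-s)^{n+m+1}}\,ds,
\end{align*}
whose integrand is manifestly positive for $t=\cosh\eta_0>1$. Replacing the asymptotic citation with this representation (or with your own toroidal integral, for the range where it converges, plus the Hobson integral for the rest) closes the gap and makes the argument rigorous.
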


\begin{proof}
In \cite[p.\ 195]{Hob1931} it is shown that
\begin{align*} 
  Q_n^m(t) = \frac{(-1)^m}{2^{n + 1}} \frac{(n + m)!}{n!} \, (t^2-1)^{m/2}
  \int_{-1}^1   \frac{(1 - s^2)^n}{(t - s)^{n+m+1}} \,ds   
\end{align*}
(in fact, this rather than \eqref{eq:legendrefunctions} is taken as
the definition of $Q_n^m(t)$ for $n,m\in\C$).  From this it follows
that
\begin{align}  \label{eq:qnonzero}
  (-1)^m Q_n^m(\cosh \eta) > 0
\end{align}
for all $n,m,\eta\in\R^+$.  From \eqref{eq:defrhosigmatau}, we need to
show that the value
\begin{align}
\label{eq:tobenonzero}
   16 s_{0}\,q_{n+1,m}\,\tau_{n,m}={4}\big((2n+3)t_{0}\,q_{n+1,m}+\big(2(n-m)-3\big)q_{n+2,m}\big)
\end{align}
does not vanish.  Consider the recursion formulas from \cite[pp.\
161--162]{Bateman1944} and \cite[p.\ 108]{Hob1931}
\begin{align*}  
    (n-m+1)Q_{n+1}^{m}(t) &= (2n+1)\,t\, Q_{n}^{m}(t)-(n+m)Q_{n-1}^{m}(t), \\
  (t^{2}-1)^{1/2} Q_{n+1}^{m}(t) &= \frac{1}{2n+3}\big(Q_{n+2}^{m+1}(t)-Q_{n}^{m+1}(t)\big); 
\end{align*}
i.e.,
\begin{align}
  2(n+1)\,s_{0}\,q_{n+1,m-1}&=q_{n,m}-q_{n+2,m} ,  \label{eq:recursionapply1}\\
  2(n+1)\,t_{0}\,q_{n+1,m} &=(n-m+\frac{3}{2})q_{n+2,m}+(n+m+\frac{1}{2})q_{n,m} .\label{eq:recursionapply2}
\end{align}
Applying \eqref{eq:recursionapply1} to \eqref{eq:tobenonzero}, we find
\begin{align*}
 16(n+1)\,s_{0}\,q_{n+1,m}\,\tau_{n,m}=(2n+3)\big(2(m+n)+1\big)q_{n,m}-(1+2n)\big(2n-2m+3\big)q_{n+2,m}.
\end{align*}
Now add and subtract $(2n+3)(2(m+n)+1)q_{n+2,m}$ and use
\eqref{eq:recursionapply2}, yielding
    \begin{align*}
16\,s_{0}\,q_{n+1,m}\,\tau_{n,m}=-2(2n+3)(2n+2m+1)q_{n+1,m-1}+8\,m\,q_{n+2,m},
\end{align*}
 which by (\ref{eq:qnonzero})  is never zero.
\end{proof}


By Lemma \ref{lemm:nonzerocoef}, when $\mu$ and $m$ are specified,
using arbitrary values of $a\ind{0,m}{+,\mu}$ or $a\ind{1,m}{-,\mu}$,
one may solve the first equations of \eqref{eq:bfroma} to find
\begin{align*}
  a\ind{1,m}{+,\mu}  &= \frac{1}{\tau_{0,m}}
      (b\ind{0,m}{+,\mu} - \sigma_{0,m}\,  a\ind{0,m}{+,\mu} ) ,
\end{align*}
or
\begin{align*}
  a\ind{2,m}{-,\mu}  &= \frac{1}{\tau_{1,m}}
      (b\ind{1,m}{-,\mu} - \sigma_{1,m}\, a\ind{1,m}{-,\mu} )
\end{align*}
respectively. Then the remaining equations may be solved
successively. If this is done for all admissible combinations of
$(m,\nu,\mu)$, an algebraic solution for \eqref{eq:bfroma} is
obtained, uniquely determined by the collection of initial values
$\{a\ind{0,m}{+,\mu}, \ a\ind{1,m}{-,\mu}\}$.

\begin{remark}
  When one applies the strategy given above to the Neumann problem on
  the sphere $|x|<1$ in spherical coordinates $y_0=\rho\cos\theta$,
  $y_1=\rho\sin\theta\cos\phi$, $y_2=\rho\sin\theta\sin\phi$, and uses
  the standard solid spherical harmonics
  $Y_{n,m}^\pm=\rho^n P_n^m(\cos\theta)\Phi_m^\pm(\varphi)$,
  $0\le m\le n$ as the basis for the harmonic functions, it is natural
  to represent the Dirichlet and Neumann functions $f(\theta,\varphi)$
  and $h(\theta,\varphi)$ with the well-known basis
  $\{Y_{n,m}^\pm(1,\theta,\varphi)$\} for $L^2$ functions on the
  sphere \cite{Raynor1934}. This yields certain coefficients
  $a_{n,m}^\pm$ and $b_{n,m}^\pm$, respectively.  Since the normal
  derivative in this situation is equal to the radial derivative,
  $\nor u = (\partial u/\partial\rho)\vert_{\rho=1}$, one sees immediately
  that $b_{n,m}^\pm=na_{n,m}^\pm$, so the analogue of the system
  \eqref{eq:bfroma} is rather trivial. We have not seen this type of
  solution presented in the literature. (In \cite[p.\
  218]{Lebedev1965}, this approach is suggested in a remark after
  expressing the solution to the Dirichlet problem for a spheroid, but
  only for functions constant with respect to the angular coordinate,
  i.e., involving $P_n$ but not general $P_n^m$.)
\end{remark}

\subsection{When does an algebraic solution give a convergent series?\label{subsec:convergent}}

Now we investigate how to obtain algebraic solutions which, in fact,
give solutions to the Neumann problem. It is not difficult to verify
that when $h$ in Proposition   \ref{prop:neumanntheorem} is real analytic,
the solution $f$ to the Neumann problem is also real analytic.

We have the following. Write
$f\ind{m_0}{\nu_0,\mu_0}$ for the sum over $n$ of those terms of the
series \eqref{eq:fseries} for which $(m,\nu,\mu)=(m_0,\nu_0,\mu_0)$.

\begin{prop} \label{prop:optimala} Let the coefficients
  $\{b\ind{n,m}{\nu,\mu}\}$ be such that  the series
  \eqref{eq:defh} converges absolutely, defining
  $h\colon\bdry\to\R$. Assume that $b\ind{n,0}{+,+}$ satisfy
  \eqref{eq:bcompat}, so $h$ satisfies the compatibility condition
  \eqref{eq:compat}. Suppose further that the continuous solution
  $f\colon\bdry\to\R$ of $\Lambda f=h$ specified in Proposition
  \ref{prop:neumanntheorem} has a double Fourier series which
  converges absolutely. Then (i) for every value of
  $a\ind{0,0}{+,+}\in\R$, the resulting algebraic solution for the
  sequence $\{a\ind{n,0}{+,+}\}$ produces an absolutely convergent
  series $\sum_{n} a\ind{n,0}{+,+}I\ind{n,0}{+,+}(\eta_0,\theta,\phi)$
  whose value is $f\ind{0}{+,+}$ plus a constant.  Further, (ii) for
  $(m,\nu,\mu)$ different from $(0,+1,+1)$, there exists a unique
  value of $a\ind{0,m}{\nu,\mu}$ (when $\nu=1$) or
  $a\ind{1,m}{\nu,\mu}$ (when $\nu=-1$) for which the resulting
  algebraic solution gives a convergent series
  $\sum_{n}
  a\ind{n,m}{\nu,\mu}I\ind{n,m}{\nu,\mu}(\eta_0,\theta,\phi)$. The sum
  of this series is $f\ind{m}{\nu,\mu}$.
  
\end{prop}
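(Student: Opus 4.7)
The plan is to measure any algebraic solution against the ``true'' Fourier coefficients of $f$. By hypothesis $f$ admits a unique absolutely convergent expansion with coefficients $\tilde a\ind{n,m}{\nu,\mu}$, and since $\Lambda f=h$ the identity of Theorem \ref{theo:bfroma} (extended to the present setting by continuity of the trace and Dirichlet-to-Neumann maps, or by truncation and passage to the limit) forces $\{\tilde a\ind{n,m}{\nu,\mu}\}$ to be a particular algebraic solution of \eqref{eq:bfroma}. In each block $(m,\nu,\mu)$, then, any algebraic solution takes the form $a_n=\tilde a_n+\delta_n$, where $\{\delta_n\}$ satisfies the homogeneous three-term recurrence $\rho_{n,m}\delta_{n-1}+\sigma_{n,m}\delta_n+\tau_{n,m}\delta_{n+1}=0$ together with the block-specific initial equation. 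The asymptotic relations \eqref{eq:coeflimits} reduce this to $\delta_{n+1}-2t_0\delta_n+\delta_{n-1}=0$ with characteristic roots $e^{\pm\eta_0}$, so solutions split into a decaying branch $(\delta_n=O(e^{-n\eta_0}))$ and a growing branch; convergence of the associated Fourier series $\sum\delta_n\Phi_n^\nu(\theta)\Phi_m^\mu(\varphi)$ forces $\delta_n\to 0$, so $\{\delta_n\}$ must lie on the decaying branch.

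For part (i), I would produce an explicit nonzero decaying homogeneous solution in the $(0,+,+)$ block by applying Proposition \ref{prop:generalizedheine} with $\alpha=\tfrac12$: this yields $1=\sum_n c_n I\ind{n,0}{+,+}\vert_\bdry$ with $c_n=\tfrac{\sqrt2}{\pi}\varepsilon_n Q_{n-1/2}^0(\cosh\eta_0)$. Since the constant $1$ is harmonic with zero normal derivative, $\{c_n\}$ solves the homogeneous system; it decays exponentially by \eqref{eq:qasymptotic} and satisfies $c_0\neq 0$. As the homogeneous solution space is one-dimensional, every admissible $\delta_n$ equals $kc_n$ for some $k\in\R$, and $\sum_n(\tilde a_n+kc_n)I\ind{n,0}{+,+}\vert_\bdry$ converges absolutely to $f\ind{0}{+,+}+k$. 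The assignment $k\mapsto \tilde a\ind{0,0}{+,+}+kc_0$ is a bijection of $\R$, so every value of $a\ind{0,0}{+,+}$ is realized, proving (i).

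For part (ii), fix $(m,\nu,\mu)\neq(0,+,+)$ and suppose toward a contradiction that a nonzero decaying homogeneous $\{\delta_n\}$ satisfies the corresponding initial equation. Then $G(x)=\sum_n\delta_n I\ind{n,m}{\nu,\mu}(x)$ converges absolutely and uniformly on compact subsets of $\dom$, since the interior ratio $Q_{n-1/2}^m(\cosh\eta)/q_{n,m}$ decays geometrically like $e^{-(\eta-\eta_0)n}$ by \eqref{eq:qasymptotic}. Hence $G\in\har(\dom)$ with boundary trace $g=\sqrt{t_0-\cos\theta}\sum_n\delta_n\Phi_n^\nu(\theta)\Phi_m^\mu(\varphi)$, and a termwise application of Lemma \ref{lemm:nori} yields $\nor G=\sqrt{t_0-\cos\theta}\sum_n\Phi_n^\nu(\theta)\Phi_m^\mu(\varphi)\bigl(\rho_{n,m}\delta_{n-1}+\sigma_{n,m}\delta_n+\tau_{n,m}\delta_{n+1}\bigr)=0$. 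The uniqueness clause of Proposition \ref{prop:neumanntheorem} then forces $G$ to be a constant $K$; but $K$ expanded in the basis $\{I\ind{n,m}{\nu,\mu}\vert_\bdry\}$ sits entirely in the $(0,+,+)$ block (via the $c_n$ above), whereas $G\vert_\bdry$ by construction sits in the block $(m,\nu,\mu)$. Orthogonality (Proposition \ref{prop:norms}) therefore forces $K=0$ and $\delta_n\equiv 0$, a contradiction. The initial value is thus unique, and the corresponding algebraic solution coincides with $\tilde a$ and sums to $f\ind{m}{\nu,\mu}$.

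The hard part will be carrying out the termwise manipulations of part (ii) in a fully justified way at the boundary: while absolute convergence of $\sum|\delta_n|$ (ensured by the decaying branch asymptotics) delivers the dominated-convergence bounds needed to interchange $\nor$ with the summation and to identify $G\vert_\bdry=g$, one must line up these estimates against the $O(n+m)$ bound for $|\nor I\ind{n,m}{\nu,\mu}|$ from Lemma \ref{lemm:nori} before invoking Proposition \ref{prop:neumanntheorem} to conclude that $G$ is constant. Part (i) is essentially algebraic once the constant-function trick is in place, and is the easier half of the argument.
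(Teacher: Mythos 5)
Your proof is correct and, while it proves the same proposition, it takes a more explicit and technically different route than the paper.  The paper proves part (i) by a direct recurrence-formula computation (its Lemma \ref{lemm:0relations}), establishing algebraically that $\sigma_{0,0}q_{0,0}+2\tau_{0,0}q_{1,0}=0$, $\rho_{1,0}q_{0,0}+2(\sigma_{1,0}q_{1,0}+\tau_{1,0}q_{2,0})=0$, and $\rho_{n,0}q_{n-1,0}+\sigma_{n,0}q_{n,0}+\tau_{n,0}q_{n+1,0}=0$ for $n\ge 2$, which directly exhibits the homogeneous algebraic solution and shows (via Proposition \ref{prop:generalizedheine} with $\alpha=1/2$) that it sums to a constant; part (ii) is then dispatched in one sentence as ``follows immediately from Proposition \ref{prop:neumanntheorem} together with the independence of the blocks.''  You instead (a) observe directly that the constant function $1$, being harmonic with $\nor 1 = 0$, supplies a decaying nonzero homogeneous solution $\{c_n\}$, avoiding any hands-on recurrence manipulations, and (b) invoke a Poincar\'e--Perron argument to split the three-term recurrence into an exponentially growing and an exponentially decaying branch, which you then use both to identify the one-dimensional convergent homogeneous subspace and to promote ``$\delta_n\to 0$'' to geometric decay so that the termwise application of Lemma \ref{lemm:nori} is legitimate.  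The paper does not cite Poincar\'e--Perron; what you gain from it is precisely the rigor needed to justify the ``clearly $a\ind{0,m}{\nu,\mu}=0$'' step in Subsection \ref{subsec:0++}, and also to close the gap you flag at the end (exponential decay of $\delta_n$ easily dominates the $O(n+m)$ growth of $|\nor I\ind{n,m}{\nu,\mu}|$, so this is not a real obstruction).  One caution on details: the identities in Lemma \ref{lemm:0relations} force the homogeneous solution to have the shape $a_n\propto(2-\delta_{n,0})q_{n,0}$ (the Neumann factor $1,2,2,\dots$), not $\varepsilon_n q_{n,0}$ with $\varepsilon_n=1+\delta_{n,0}$; this reflects a genuine notational ambiguity between $\epsilon_n$ in Proposition \ref{prop:generalizedheine} and $\varepsilon_n$ elsewhere in the paper, so your formula $c_n=\tfrac{\sqrt2}{\pi}\varepsilon_n Q^0_{n-1/2}(\cosh\eta_0)$ likely has a factor-of-two slip — but this does not affect the logical structure, since all that is used are the facts that $\{c_n\}$ is a nonzero exponentially decaying homogeneous solution with $c_0\neq 0$.
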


Given the convergence criteria of Theorem \ref{theo:bfroma}, almost
all of Proposition \ref{prop:optimala} follows immediately from
Proposition \ref{prop:neumanntheorem} (together with the observation
that the solutions of the subsystem for each combination of
$(m,\nu,\mu)$ are essentially independent).

\subsection{Convergence for the indices  $(m,\nu,\mu)=(0,+1,+1)$\label{subsec:0++}}

The only assertion of Proposition \ref{prop:optimala} which remains to
be verified is that the value of the coefficient $a\ind{0,0}{+,+}$
referred to in part (i)  is arbitrary. First,
we observe that for the particular indices
$(m,\nu,\mu)=(0,+1,+1)$, the Neumann constants satisfy some special
relations.

\begin{lemm} \label{lemm:0relations}
  $\sigma_{0,0}\, q_{0,0} + 2\,\tau_{0,0}\, q_{1,0} =0$;
  $\rho_{1,0}\, q_{0,0} + 2(\sigma_{1,0} \, q_{1,0}+ \tau_{1,0}\, q_{2,0})=0$;
  and for $n\ge2$, 
  \[ \rho_{n,0}\, q_{n-1,0}+ \sigma_{n,0} \, q_{n,0}+ \tau_{n,0} \, q_{n+1,0}= 0.
  \]
\end{lemm}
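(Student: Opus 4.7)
The plan is to reduce all three identities to the single three-term recurrence
\[
(2n+1)\,q_{n+1,0} - 4n\,t_0\,q_{n,0} + (2n-1)\,q_{n-1,0} = 0, \qquad n \ge 1,
\]
satisfied by $q_{n,0} = Q_{n-1/2}(t_0)$; this follows from \eqref{eq:recursionapply2} at $m=0$ after clearing the fractions and reindexing $n \mapsto n-1$.

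For the main case $n \ge 2$, I would observe that with $m=0$ the factors $2(m-n)+1$, $2(m-n)-1$, $2(m-n)-3$ in \eqref{eq:defrhosigmatau} simplify to $-(2n-1)$, $-(2n+1)$, $-(2n+3)$, so that
\begin{align*}
4s_0\,\rho_{n,0}\,q_{n-1,0} &= (2n-1)\bigl(t_0\,q_{n-1,0} - q_{n,0}\bigr),\\
4s_0\,\sigma_{n,0}\,q_{n,0} &= -2\bigl((2nt_0^2+1)\,q_{n,0} - (2n+1)\,t_0\,q_{n+1,0}\bigr),\\
4s_0\,\tau_{n,0}\,q_{n+1,0} &= (2n+3)\bigl(t_0\,q_{n+1,0} - q_{n+2,0}\bigr).
\end{align*}
Summing these and using the recurrence at index $n+1$ to rewrite $(2n+3)\,q_{n+2,0}$ as $4(n+1)\,t_0\,q_{n+1,0} - (2n+1)\,q_{n,0}$, a direct collection of coefficients collapses the total to
\[
t_0\bigl[(2n-1)\,q_{n-1,0} - 4n\,t_0\,q_{n,0} + (2n+1)\,q_{n+1,0}\bigr],
\]
which vanishes by the recurrence at index $n$.

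For the two edge identities I would carry out analogous direct substitutions, taking care to use the special formula $\rho_{1,0} = \frac{1}{2s_0}\bigl(t_0 - q_{1,0}/q_{0,0}\bigr)$ (whose denominator is $2s_0$ rather than $4s_0$) and to include the prefactors of~$2$ on $\tau_{0,0}$ and on $(\sigma_{1,0},\tau_{1,0})$. After clearing denominators, the $n=0$ identity reduces to $-q_{0,0} + 4t_0\,q_{1,0} - 3\,q_{2,0} = 0$, which is exactly the recurrence at $n=1$; the $n=1$ identity reduces, after one application of the recurrence at $n=2$ to eliminate $q_{3,0}$, to $t_0\bigl(q_{0,0} - 4t_0\,q_{1,0} + 3\,q_{2,0}\bigr) = 0$, which is again the recurrence at $n=1$.

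The argument is essentially careful bookkeeping; the only mildly conceptual point is recognizing that the prefactors of~$2$ in the edge identities are precisely those required to make the cancellation go through. No additional estimates or new ideas are needed, so no real obstacle is anticipated beyond the algebra.
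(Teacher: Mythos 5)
Your proof is correct, and it is a cleaner instance of the same basic strategy the paper follows (substitute the $m=0$ specializations of $\rho$, $\sigma$, $\tau$, clear denominators, and invoke a Legendre recurrence). The one genuine difference is which recurrences are used: you reduce all three identities to a single three-term recurrence in $n$ at fixed order $m=0$, namely $(2n+1)\,q_{n+1,0} - 4n\,t_0\,q_{n,0} + (2n-1)\,q_{n-1,0} = 0$, applying it twice for the generic case and once for each edge case. The paper instead asserts the collapse for $n\ge2$ without spelling out the recurrence step, and for the $n=1$ edge case it brings in a second recursion \eqref{eq:recursionapply3} that mixes orders (introducing $q_{0,1}$), making the cancellation less transparent. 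Your version is therefore slightly more economical and systematic, and it also supplies the justification for the final ``$=0$'' in the generic case that the paper leaves implicit. (One very minor point: in your $n=1$ verification you report the intermediate expression as $t_0(q_{0,0}-4t_0 q_{1,0}+3q_{2,0})$ whereas clearing by $4s_0$ gives $2t_0(q_{0,0}-4t_0 q_{1,0}+3q_{2,0})$; the overall constant is immaterial since the bracket vanishes, but the bookkeeping should be kept straight if this is written out in full.)
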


\begin{proof}  
  Via the recursion formula (\ref{eq:recursionapply2}) as well as the
  following \cite[pp.\ 161--162]{Bateman1944},
\begin{align*}
  (n-m)t\, Q_{n}^{m}(t) = (t^2-1)^{1/2} \, Q_{n}^{m+1}(t)+(n+m)Q_{n-1}^{m}(t),
\end{align*}
i.e.,
\begin{align}  \label{eq:recursionapply3}
 (n-m-\frac{1}{2})t_{0}q_{n,m}=s_{0}\,q_{n,m+1}+(n+m-\frac{1}{2})\,q_{n-1,m}
\end{align}
direct computations show that
\begin{align*}
  \sigma_{0,0}\, q_{0,0} + 2\,\tau_{0,0}\, q_{1,0} =& \, \frac{-1}{2s_{0}}\big(q_{0,0}-t_{0} q_{1,0} \big)+\frac{3}{2s_{0}}\big(t_{0}q_{1,0}-q_{2,0}\big)\\
  =&\, \frac{3}{8s_{0}}\big(q_{2,0}-q_{0,0}\big)-\frac{3}{8s_{0}}\big(q_{2,0}-q_{0,0}\big).
\end{align*}
Similarly, we find
   \begin{align*}
\rho_{1,0} &\, q_{0,0} + 2(\sigma_{1,0} \, q_{1,0}+ \tau_{1,0}\, q_{2,0})
     \\=& \, \frac{1}{4s_{0}}\big(t_{0}\,q_{0,0}-q_{1,0}\big)+
          \frac{1}{s_{0}} \big(-(2t_{0}^{2}+1)\,q_{1,0}+
          \frac{11}{2}\,t_{0}\,q_{2,0}-\frac{5}{2}\,q_{3,0}\big)\\=&
   -\frac{1}{2}\,q_{0,0}+\frac{1}{s_{0}}\big(\frac{1}{2}q_{1,0}+\frac{1}{2}\,s_{0}\,q_{0,1}-\frac{1}{2}\,q_{-1,0}\big )\\=& \, 0.
   \end{align*} 
Finally,
   \begin{align*}
    \rho_{n,0}\, q_{n-1,0}+ \sigma_{n,0} \, q_{n,0}+ \tau_{n,0} \, q_{n+1,0}  &=  (2n-1)t_{0} q_{n-1,0} +(-2n+1-4nt_{0}^{2}-2)q_{n,0} \\
   & \quad\  +(6n+5)t_{0}q_{n+1,0}-(2n+3)q_{n+2,0}\\ & =0.  
      \end{align*}      
\end{proof}

When $b\ind{n,0}{+,+}=0$ for all $n$, the corresponding equations
\eqref{eq:bfroma} are linear homogeneous, and Lemma
\ref{lemm:0relations} implies that
\begin{align}  \label{eq:constsol}
 \frac{a\ind{n,0}{+,+}}{q_{n,0}} = 2 \frac{a\ind{0,0}{+,+}}{q_{0,0}}\ \ (n\ge 1);
\end{align}
i.e., $a\ind{n,0}{+,+} = \varepsilon_n a\ind{0,0}{+,+}$ for all $n$.
On comparing the formula of Proposition
\ref{prop:generalizedheine} with the exponent determined by
$\alpha=1/2$, one sees that the solution
\begin{align*}
  f_{0}^{+,+}(\theta,\varphi) = \frac {a\ind{0,0}{+,+}}{q_{0,0}}
  \sum_{n=0}^\infty \varepsilon_n q_{n,0}\cos n\theta\,\sqrt{t_0-\cos\theta} 
\end{align*}
is indeed equal to the constant function on
$\bdry$ with value $(\pi/\sqrt{2})(a\ind{0,0}{+,+}/q_{0,0})$. The
solution to the Dirichlet problem in $\dom$ is the same
constant. Thus, given any $a\ind{0,0}{+,+}\in \mathbb{R}$, the
algebraic solution gives a convergent series
$\sum_{n=0}^\infty
a\ind{n,0}{+,+}I\ind{n,0}{+,+}(\eta_0,\theta,\varphi)$, and the initial
value $ a\ind{0,0}{+,+}+c$ in the system \eqref{eq:bfroma} will
generate the series
$\sum_{n=0}^\infty (a\ind{n,0}{+,+}+c\varepsilon_n)
I\ind{n,0}{+,+}(\eta_0,\theta,\varphi)$ which also converges.  This
confirms the nonuniqueness statement we made immediately after
Proposition \ref{prop:neumanntheorem}.

Now suppose for a moment that $h$ is identically zero, so always
$b\ind{n,m}{\pm,\pm}=0$.  For indices with $(m,\nu,\mu)\not=(0,1,1)$,
the unique value of $a\ind{0,m}{\nu,\mu}$ provided by Proposition
\ref{prop:optimala} for generating a convergent series is clearly
$a\ind{0,m}{\nu,\mu}=0$ (with more work, one could also see this by
solving the linear homogeneous system explicitly for a nonzero
starting value and verifying via properties of Legendre functions that
the result does not converge). Returning to arbitrary $h$ satisfying
the compatibility condition, we see that starting from any solution
$f$ given by Proposition \ref{prop:optimala}, we may add any multiple
of the sequence $\epsilon_n$ to the coefficients $a\ind{0,0}{+,+}$,
leaving the remaining $a\ind{n,m}{\nu,\mu}$ unchanged, and obtain
another algebraic solution which, in fact, converges. This verifies
the above statement that arbitrarily chosen $a\ind{0,0}{+,+}$ will
produce an algebraic solution which defines a convergent series. These
considerations also lead to the following.

\begin{prop}
  The area of $\bdry$ is equal to
  \begin{align*}
    \alpha(\eta_0) = -8 \sum_{n=0}^\infty \varepsilon_n^3 q_{n,0}q_{n,1}.
  \end{align*}
\end{prop}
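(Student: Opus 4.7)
The plan is to exploit the integration-by-parts-free identity already developed inside the proof of the compatibility lemma, applied in the degenerate case $h\equiv 1$. Once we can write the constant function $1$ in the same normal form $\sqrt{\cosh\eta_0-\cos\theta}\,\sum b\ind{n,m}{\nu,\mu}\Phi\ind{n}{\nu}\Phi\ind{m}{\mu}$ that the lemma processes, the surface integral $\iint_\bdry 1\,dS$ \emph{is} the area, and the calculation outputs precisely an expression of the stated shape.

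First I would recall that the surface element on $\bdry$ arising from $\varphi,\theta$ in toroidal coordinates is $dS=\tfrac{s_0}{(t_0-\cos\theta)^2}d\theta\,d\varphi$, as already used in the proof of the compatibility lemma. Next, I would expand the constant $1$ in the basis of boundary traces of interior toroidal harmonics. Writing $1=\sqrt{t_0-\cos\theta}\cdot(t_0-\cos\theta)^{-1/2}$ and applying Proposition \ref{prop:generalizedheine} with $\alpha=1/2$ (so $\Gamma(1/2)=\sqrt{\pi}$, $e^{-i\pi\cdot 0}=1$, $(\sinh\eta_0)^0=1$, and $Q_{n-1/2}^{0}(t_0)=q_{n,0}$) yields
\begin{equation*}
(t_0-\cos\theta)^{-1/2}=\frac{\sqrt{2}}{\pi}\sum_{n=0}^{\infty}\varepsilon_n\, q_{n,0}\cos(n\theta),
\end{equation*}
so $1$ has an expansion of the form \eqref{eq:defh} whose only nonzero coefficients are $b\ind{n,0}{+,+}=\tfrac{\sqrt 2}{\pi}\varepsilon_n q_{n,0}$.

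I would then rerun the exact chain of identities from the proof of Lemma \eqref{eq:bcompat}: the $\varphi$-integral kills every term except $(m,\mu)=(0,+)$, the $\Phi\ind{n}{-}$ terms integrate to zero in $\theta$, and the remaining $\theta$-integrals $\int_0^{2\pi}(t_0-\cos\theta)^{-3/2}\cos(n\theta)\,d\theta$ are evaluated by Proposition \ref{prop:generalizedheine} with $\alpha=3/2$, producing the factor $-4\sqrt{2}\pi\,\varepsilon_n^2 q_{n,1}$ (as in \eqref{eq:bcompat}). Substituting back gives
\begin{equation*}
\alpha(\eta_0)=\iint_{\bdry} 1\,dS=-4\sqrt{2}\pi\sum_{n=0}^{\infty}\varepsilon_n^2\,b\ind{n,0}{+,+}\,q_{n,1}
= -4\sqrt 2\pi\cdot\frac{\sqrt 2}{\pi}\sum_{n=0}^{\infty}\varepsilon_n^3 q_{n,0}q_{n,1},
\end{equation*}
which simplifies to the claimed $-8\sum_{n\ge 0}\varepsilon_n^3 q_{n,0}q_{n,1}$.

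There is no real obstacle here; the only thing to watch is bookkeeping of the two applications of Proposition \ref{prop:generalizedheine} (one with $\alpha=1/2$, one with $\alpha=3/2$) and making sure the $\varepsilon_n$ factors combine to the cube as expected: one $\varepsilon_n$ from the coefficient of $1$ in its boundary expansion, and $\varepsilon_n^2$ from the compatibility-lemma integral (one $\varepsilon_n$ from the Heine coefficient and one from the $\cos(n\theta)$ orthogonality normalization). As a sanity check, I would verify that the resulting value agrees with the classical formula $4\pi^2 t_0/s_0^2$ for the surface area of $\bdry$; equivalently, this yields the closed-form identity $\sum_{n\ge 0}\varepsilon_n^3 q_{n,0}q_{n,1}=-\pi^2 t_0/(2 s_0^2)$, which is a pleasant byproduct of the argument.
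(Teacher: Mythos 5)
Your proposal is correct and is essentially the same argument as the paper's: the paper's (very terse) proof takes $a\ind{0,0}{+,+}=(\sqrt{2}/\pi)q_{0,0}$ so that $f\ind{0}{+,+}\equiv 1$ (which rests on the same $\alpha=1/2$ case of Proposition~\ref{prop:generalizedheine} you invoke) and then cites the normalization identity \eqref{eq:acompat} directly, whereas you expand $1$ explicitly and unfold the lemma's integral calculation — but these are the same chain of steps, merely packaged at a different level of explicitness.
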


\begin{proof}
  Take $a\ind{0,0}{+,+}=(\sqrt{2}/\pi)q_{0,0}$, which gives $f\ind{0}{+,+}=1$
  identically. Then apply \eqref{eq:acompat} to evaluate
  $\alpha(\eta_0)=\int_\bdry f\ind{0}{+,+}\,dS$.
\end{proof}

\begin{coro}
  Let $f$ be a particular solution of $\Lambda f=h$ and set
  $c_1=\int_\bdry f\,dS$.  Let $\hat f$ be obtained by replacing the
  coefficients $a\ind{n,0}{+,+}$ for $f$ with
    \begin{align*}
      \hat a\ind{n,0}{+,+} =  a\ind{n,0}{+,+} +
      \varepsilon_n \frac{\sqrt{2}}{\pi} \frac{q_{n,1}}{\alpha(\eta_0)}(c-c_1).
    \end{align*}
    Then $\hat f$ is the unique solution of the Neumann problem which
    satisfies the normalization condition \eqref{eq:normalization}.
\end{coro}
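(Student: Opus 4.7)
The plan is to exhibit the passage from $f$ to $\hat f$ as the addition of a constant function on $\bdry$. Once this identification is made, everything follows at once: the normal derivative of a constant vanishes, so $\Lambda\hat f=\Lambda f=h$; the constant is the unique one that shifts the surface integral from $c_1$ to $c$; and uniqueness is then immediate from Proposition~\ref{prop:neumanntheorem}.

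First I would write $\Delta a\ind{n,0}{+,+}:=\hat a\ind{n,0}{+,+}-a\ind{n,0}{+,+}$ and observe that the modification touches only the $(m,\nu,\mu)=(0,+,+)$ channel. The sequence $\{\Delta a\ind{n,0}{+,+}\}$ is a scalar multiple of the one-parameter family of algebraic solutions of \eqref{eq:bfroma} with all $b\ind{n,0}{+,+}=0$ produced in Section~\ref{subsec:0++} via Lemma~\ref{lemm:0relations} and \eqref{eq:constsol}; hence $\hat a$ satisfies \eqref{eq:bfroma} with exactly the same $b\ind{n,m}{\nu,\mu}$ as $a$, and in particular $\Lambda\hat f=\Lambda f=h$.

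Next I would identify $\hat f-f$ as a constant function on $\bdry$ by the same collapse used in the preceding proposition: applying Proposition~\ref{prop:generalizedheine} with $\alpha=1/2$ summmates the corresponding toroidal series into a multiple of the constant function. The scalar is the one that yields $\int_\bdry(\hat f-f)\,dS=c-c_1$, which is verified either geometrically (a constant $K$ integrates to $K\alpha(\eta_0)$, so $K=(c-c_1)/\alpha(\eta_0)$) or algebraically by substituting $\Delta a\ind{n,0}{+,+}$ into the normalization identity \eqref{eq:acompat} and simplifying via the area formula $\alpha(\eta_0)=-8\sum_n\varepsilon_n^3 q_{n,0}q_{n,1}$. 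Either way, $\int_\bdry\hat f\,dS=c_1+(c-c_1)=c$. Uniqueness then follows: by Proposition~\ref{prop:neumanntheorem} any other continuous solution $\tilde f$ of $\Lambda\tilde f=h$ differs from $\hat f$ by a constant, and imposing \eqref{eq:normalization} on both forces that constant to vanish because $\alpha(\eta_0)>0$.

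The one genuinely computational ingredient—and the main potential obstacle—is the bookkeeping step that matches the explicit prefactor $\varepsilon_n(\sqrt 2/\pi)q_{n,1}/\alpha(\eta_0)$ in the statement against the constant-function sequence derived in Section~\ref{subsec:0++}: this is exactly the series manipulation that produced the preceding area proposition, run in reverse. Once it is acknowledged, the rest of the proof is a direct appeal to Proposition~\ref{prop:neumanntheorem} and the analysis already carried out for the $(0,+,+)$ channel.
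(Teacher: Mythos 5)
Your overall plan --- exhibit $\hat f - f$ as a constant on $\bdry$ so that $\Lambda\hat f=\Lambda f=h$, verify that the constant shifts $\int_\bdry f\,dS$ from $c_1$ to $c$, and appeal to Proposition~\ref{prop:neumanntheorem} for uniqueness --- is exactly the argument the paper intends; no separate proof is printed because the corollary is meant to fall out of the area proposition and the discussion of the $(0,+,+)$ channel in Subsection~\ref{subsec:0++}.

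The difficulty is that the ``bookkeeping step'' you flag and then defer is the only place the argument can break, and if you execute it you hit a mismatch. The one-parameter family of homogeneous algebraic solutions in the $(0,+,+)$ channel is, via \eqref{eq:constsol} together with Proposition~\ref{prop:generalizedheine} at $\alpha=1/2$, the sequence proportional to $\varepsilon_n\,q_{n,0}$, because $\alpha=1/2$ produces $Q_{n-1/2}^{\,0}$, i.e.\ $q_{n,0}$, and that is the coefficient pattern whose series collapses to a constant on $\bdry$. The increment written in the corollary carries $q_{n,1}$ instead of $q_{n,0}$. Since $q_{n,1}$ is not a scalar multiple of $q_{n,0}$ in $n$, the sentence in your draft asserting that $\{\Delta a\ind{n,0}{+,+}\}$ ``is a scalar multiple of the one-parameter family of algebraic solutions'' is false as it stands: the increment does not lie in the kernel of \eqref{eq:bfroma}, and Proposition~\ref{prop:generalizedheine} with $\alpha=3/2$ shows its series sums to a multiple of $(t_0-\cos\theta)^{-1}$ rather than to a constant, so $\Lambda\hat f\neq h$. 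Your alternative algebraic check via \eqref{eq:acompat} would also fail, because it yields $\sum_n\varepsilon_n^3\,q_{n,1}^2$ instead of $\sum_n\varepsilon_n^3\,q_{n,0}\,q_{n,1}=-\alpha(\eta_0)/8$. Almost certainly $q_{n,1}$ is a misprint for $q_{n,0}$, and with that correction both of your proposed verifications close exactly as you describe and the uniqueness paragraph is fine as written; but the identification you treated as a formality is precisely where this needs to be caught, so it should be carried out explicitly rather than asserted.
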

  
\subsection{Determination of parameter for convergence    for other values of  $(m,\nu,\mu)$   \label{subsec:msigtau}}

We assume now that $(m,\nu,\mu)\not=(0,+1,+1)$. The essence of the
matter is that the linear system \eqref{eq:bfroma} will only have a
unique solution after one of the variables is arbitrarily chosen, let
us say $a\ind{0,m}{+,\mu}$ (or $a\ind{1,m}{-,\mu}$).  For simplicity
of notation, we will write $a_n$ and $I_n$ in place of
$a\ind{n,m}{\nu,\mu}$ and $I\ind{n,m}{\nu,\mu}$. We will assume that
$\mu=1$ since the case $\mu=-1$ is analogous, the only difference
being the start of the indexing from $n=1$ instead of $n=0$.

Given $a\in\R$, write $A_n(a)$ for the value of $a_n$ in the solution
of the corresponding equations \eqref{eq:bfroma} determined by setting
the arbitrary parameter $a_0 = a\ind{0,m}{+,\mu}$ (or
$a\ind{1,m}{-,\mu}$) equal to $a$. Thus $A_0(a)=a$, and by a simple
induction, we have recursively defined linear expressions
\begin{align}  \label{eq:linearpoly}
  A_n(a) &= C_n a + D_n  \ (n\ge0),
\end{align}
where
\begin{alignat}{2}  \label{eq:recursive}
        C_0 &= 1,  &D_0&=0 , \nonumber\\
        C_1 &=  \frac{-\sigma_0}{\tau_0}, &D_1 &= \frac{b_0}{\tau_0} ,\nonumber\\
        C_{n+1} &= \frac{-1}{\tau_n}(\rho_ nC_{n-1} + \sigma_nC_n), \quad       
        &D_{n+1} &= \frac{-1}{\tau_n}(\rho_nD_{n-1} + \sigma_nD_n - b_n) \ \ (n\ge1).
\end{alignat}
By construction, the collection $\{A_n(a)\}$ is an algebraic solution of the
system \eqref{eq:bfroma}, whatever the value of $a$ may be.
According to \eqref{eq:fseries} and Theorem \ref{theo:bfroma}, we want to
find the unique value $a_\opt$ provided by Proposition
\ref{prop:optimala} for which
\begin{align}  \label{eq:aconvergent} 
  \sum_{n=0}^\infty A_n(a_\opt) \, \Phi\ind{n}{\nu}(\theta)\Phi\ind{m}{\mu}(\varphi)   
\end{align}
converges absolutely and thus gives
$f_{m}^{\nu,\mu}(\theta,\varphi)$. In particular, it is necessary that
$ A_n(a_\opt)\to0$ as $n\to\infty$. By \eqref{eq:linearpoly}, this
says $C_n a_\opt + D_n \to 0$.

Note that the $C_n$ do not depend on the data $\{b_n\}$. It is clear that
two consecutive terms $C_n$, $C_{n+1}$ can never vanish.  Under the
assumption that  $C_n>\epsilon>0$ for infinitely many $n$, we have
\begin{align} \label{eq:Dn/Cn}
  -\frac{D_n}{C_n} \to a_\opt
\end{align}
as $n\to\infty$ on that subsequence. We will look further into this
question in the next section.

\subsection{Numerical results}

We illustrate the solution of the Neumann problem with numerical
examples.

\textit{Vanishing normal derivative.}
Consider $m=0$, $\nu=\mu=+1$. Recall that for this particular
combination of $(m,\nu,\mu)$, the corresponding algebraic solution gives a
solution to the Neumann problem for every choice of
$a_0=a\ind{0,0}{++}$. To calculate this, one simply takes
$b_n=0$ for all $n$ (i.e., the coefficients of the
vanishing normal derivative described in Subsection
\ref{subsec:convergent}). The formulas \eqref{eq:recursive} give
$D_n=0$ always, and by \eqref{eq:linearpoly}, we have
$A_n(a)=C_n\,a$. Choosing $a_0=1$ without any loss of generality
and fixing $\eta_0$, one obtains the values $C_n$ by
\eqref{eq:recursive} and then the initial coefficients $a_n$ by
\eqref{eq:linearpoly}. This amounts to calculating values  of
the associated Legendre functions of the second kind via the recursion
formulas, and the only numerical error is that which accumulates
due to roundoff.
 
\example
\textit{Numerical behavior of $C_n$.} We observed that
$a_\opt$ is given by \eqref{eq:Dn/Cn}  unless $C_n\to0$. (Recall that the
$C_n$ do not depend on the Neumann data.) For small values of $n$, we
have little control over even the sign of the coefficients defined in
\eqref{eq:defrhosigmatau}. However, from \eqref{eq:coeflimits},
$\rho_{n,m}/\tau_{n,m}\to1$ and $\sigma_{n,m}/\tau_{n,m}\to -2t_0$.
Therefore if for a single large $n$ we have
\begin{align*}
  C_n \approx e^{\eta_0}C_{n-1},
\end{align*}
then by \eqref{eq:recursive} it would follow that
\begin{align*}
  C_{n+1} \approx -C_{n-1}+2t_0C_n = -e^{-\eta_0}C_n+2t_0C_n = e^{\eta_0}C_n;
\end{align*}
i.e.\ the sequence $\{C_n\}$ grows exponentially.  Table \ref{tab:Cn1}
lists calculated values of $C_n$ corresponding to $m=1$ and a range of
values of $\eta_0$. Other values of $m$ are shown in Table
\ref{tab:Cn2}.  Even though the initial values can decrease, in all
cases that we have examined it appears that $C_n\to\infty$
exponentially as $n\to\infty$.

 \bigskip
\begin{table}[!h] \centering
$  \begin{array}{l||r@{.}l|r@{.}l|r@{.}l|r@{.}l|r@{.}l|r@{.}l|}
 &\multicolumn{2}{c|}{\eta=0.1} &\multicolumn{2}{c|}{\eta=0.3} &\multicolumn{2}{c|}{\eta=0.5} &\multicolumn{2}{c|}{\eta=1.} &\multicolumn{2}{c|}{\eta=1.5} &\multicolumn{2}{c|}{\eta=2.}\\\hline
C_{0} &  1&  &  1&  &  1&  &  1&  &  1&  &  1&  \\
C_{1} &  1&943  &  1&697  &  1&420  &  0&866  &  0&523  &  0&316  \\
C_{2} &  1&852  &  1&418  &  1&098  &  0&720  &  0&599  &  0&557  \\
C_{3} &  1&752  &  1&229  &  0&985  &  0&996  &  1&444  &  2&294  \\
C_{4} &  1&654  &  1&120  &  1&016  &  1&761  &  4&302  &  11&303  \\
C_{5} &  1&562  &  1&076  &  1&171  &  3&476  &  14&040  &  60&780  \\
C_{6} &  1&48  &  1&085  &  1&458  &  7&286  &  48&463  &  345&631  \\
C_{7} &  1&407  &  1&144  &  1&911  &  15&885  &  173&925  &  2043&827  \\
C_{8} &  1&344  &  1&249  &  2&595  &  35&635  &  642&402  &  12440&253  \\
C_{9} &  1&290  &  1&404  &  3&611  &  81&710  &  2425&889  &  77424&156  \\
C_{10} &  1&246  &  1&616  &  5&120  &  190&646  &  9323&354  &  490447&458  \\
C_{15} &  1&139  &  4&005  &  34&827  &  15650&902  & \multicolumn{2}{l|}{ 9.298  \times 10^6 } & \multicolumn{2}{l|}{ 5.953  \times 10^9 } \\
C_{20} &  1&189  &  11&881  &  279&376  & \multicolumn{2}{l|}{ 1.521  \times 10^6 } & \multicolumn{2}{l|}{ 1.099  \times 10^{10} } & \multicolumn{2}{l|}{ 8.571  \times 10^{13} } \\
C_{30} &  1&722  &  132&742  &  22884&183  & \multicolumn{2}{l|}{ 1.839  \times 10^{10} } & \multicolumn{2}{l|}{ 1.969  \times 10^{16} } & \multicolumn{2}{l|}{ 2.278  \times 10^{22} } \\
C_{40} &  3&068  &  1751&093  & \multicolumn{2}{l|}{ 2.221  \times 10^6 } & \multicolumn{2}{l|}{ 2.641  \times 10^{14} } & \multicolumn{2}{l|}{ 4.196  \times 10^{22} } & \multicolumn{2}{l|}{ 7.201  \times 10^{30} } \\
C_{50} &  6&053  &  25335&250  & \multicolumn{2}{l|}{ 2.369  \times 10^8 } & \multicolumn{2}{l|}{ 4.173  \times 10^{18} } & \multicolumn{2}{l|}{ 9.835  \times 10^{28} } & \multicolumn{2}{l|}{ 2.505  \times 10^{39} } \\
\end{array}  
$
\caption{Sample values of $C_n$ for $m=1$,
  $(\nu,\mu)=(+,+)$. 200-digit precision was used to avoid underflow
  in the calculations. \label{tab:Cn1}}
\end{table}

\begin{table}[!t]
  \centering
$\begin{array}{l||r@{.}l|r@{.}l|r@{.}l|r@{.}l|}
 &\multicolumn{2}{c|}{m=2} &\multicolumn{2}{c|}{m=3} \
&\multicolumn{2}{c|}{m=4} &\multicolumn{2}{c|}{m=5}\\\hline
C_{0} &  1&  &  1&  &  1&  &  1&  \\
C_{1} &  1&333  &  2&112  &  2&711  &  3&119  \\
C_{2} &  2&125  &  4&111  &  6&103  &  7&835  \\
C_{3} &  5&013  &  10&308  &  16&235  &  22&035  \\
C_{4} &  13&765  &  28&988  &  46&956  &  65&655  \\

 C_{10} &  16716&741  &  36306&992  &  61217&891  &  89729&231  \\
C_{15} & \multicolumn{2}{l|}{ 1.014  \times 10^7 } & 
\multicolumn{2}{l|}{ 2.211  \times 10^7 } & \multicolumn{2}{l|}{ 
  3.749  \times 10^7 } & \multicolumn{2}{l|}{ 5.533  \times 10^7 } \\    
C_{20} & \multicolumn{2}{l|}{ 7.279  \times 10^9 } &  
\multicolumn{2}{l|}{ 1.590  \times 10^{10} } & \multicolumn{2}{l|}{ 
2.701  \times 10^{10} } & \multicolumn{2}{l|}{ 3.996  \times 10^{10} } \\

C_{30} & \multicolumn{2}{l|}{ 4.801  \times 10^{15} } &  
\multicolumn{2}{l|}{ 1.050  \times 10^{16} } & \multicolumn{2}{l|}{ 
1.786  \times 10^{16} } & \multicolumn{2}{l|}{ 2.648  \times 10^{16} 
} \\
C_{40} & \multicolumn{2}{l|}{ 3.764  \times 10^{21} } & 
\multicolumn{2}{l|}{ 8.233  \times 10^{21} } & \multicolumn{2}{l|}{  
1.402  \times 10^{22} } & \multicolumn{2}{l|}{ 2.080  \times 10^{22} } 
\\
C_{50} & \multicolumn{2}{l|}{ 3.246  \times 10^{27} } &  
\multicolumn{2}{l|}{ 7.102  \times 10^{27} } & \multicolumn{2}{l|}{  
1.209  \times 10^{28} } & \multicolumn{2}{l|}{ 1.795  \times 10^{28} 
} \\
\end{array}  
$
\caption{\small Sample values of $C_n$ for $\eta=0.4$, $(\nu,\mu)=(+,+)$.\label{tab:Cn2}}
\end{table}

\example Let
\begin{align}  \label{eq:exampleu}
  u = \left( \frac{\sinh\eta}{\cosh\eta-\cos\theta } \right)^m  \cos m \varphi .
\end{align}
It is readily checked that $u$ is harmonic and
\begin{align}  \label{eq:noruexample}
  \nor u = m   \left( \frac{\sinh\eta_0}{ \cosh\eta_0-\cos\theta }\right)^m
  \left((\cosh\eta_0-\cos\theta)\coth\eta_0 +\sinh\eta_0\right)\cos m \varphi .
\end{align}
(One also would obtain a harmonic function with $\sin m\varphi$ in place of
$\cos m\varphi$ in \eqref{eq:exampleu}.)
By Proposition \ref{prop:generalizedheine}, the coefficients
  in the series for $u$ are  equal to
\begin{align}
  a\ind{n,m}{\nu,\mu} = (-1)^m \frac{\sqrt{2/\pi}}{\Gamma(m + 1/2)} \,
  \varepsilon_n q_{n,m}.
\end{align}
We substitute these coefficients into \eqref{eq:bfroma} to obtain
numerical values for the $b\ind{n,m}{\nu,\mu}$. Then we compare
truncations of the series \eqref{eq:hD2Nseries} with the true values
of $h=\nor u$ according to \eqref{eq:noruexample}.  Figure
\ref{fig:bfromaerror} displays the base-10 logarithm of the absolute
error for different combinations of $m$ and $\eta_0$. As is expected,
the error is reduced when the number of terms in the series increases.
It is also seen that the error increases steadily when larger values of $m$
and $\eta_0$ are used.

\begin{figure}[!h]
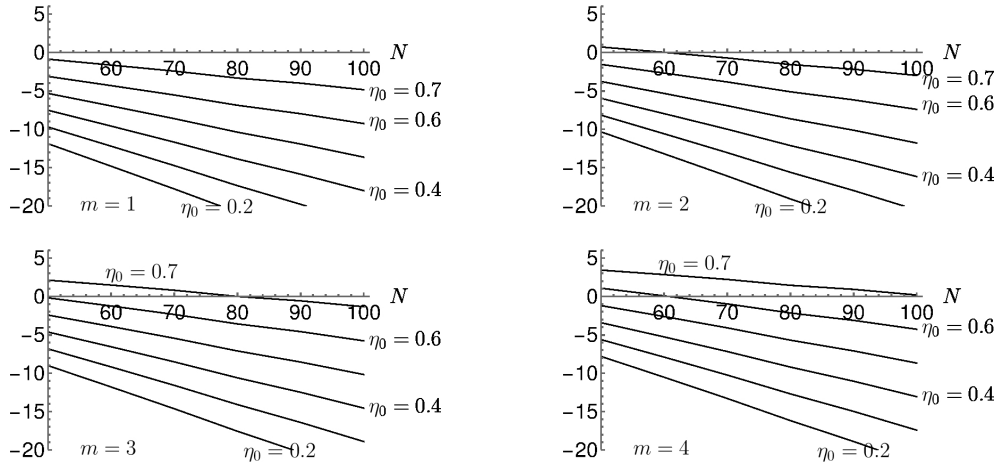

\pic{fig_d2n_1}{1.2, -3.5}{1.}{scale=.20} 
\pic{fig_d2n_2}{9.8, -3.5}{1.}{scale=.20} 

\pic{fig_d2n_3}{1.2,-3.5}{1.1}{scale=.20}
\pic{fig_d2n_4}{9.8, -3.5}{1.1}{scale=.20}
\caption{\small Base-10 logarithm indicating number of significant
  figures of approximation of the Dirichlet-to-Neumann mapping given
  by equations \eqref{eq:bfroma} truncating the series
  \eqref{eq:hD2Nseries} to $0\le n\le N$ for varying values of
  $N$. Accuracy is lost as $m$ or $\eta_0$ increases. 100-digit
  precision was used.
  \label{fig:bfromaerror}}
\end{figure}

\example We now illustrate our algorithm for solving the Neumann
problem. We will use the same function $u$ as in the previous example. The
Fourier coefficients $b\ind{n,m}{\nu,\mu}$ are obtained by numerical
integration. Then the auxiliary coefficients $C_n$, $D_n$ are obtained
recursively by \eqref{eq:recursive}, and then $a_\opt$ is approximated by the last
value of $-C_n/D_n$ according to \eqref{eq:Dn/Cn}.  One would expect
that the values $a\ind{n,m}{\nu,\mu}=A_n(a_\opt)$ of \eqref{eq:Dn/Cn}
provide a convergent series, while for $a\not=a_\opt$, $\{A_n(a)\}$
would not.  This is confirmed by Figure \ref{fig:aopt}, which shows
the values of $A_n(a+\epsilon)$ for small values of $\epsilon$.  The
error in a particular series solution $h$ of the Neumann problem
compared to \eqref{eq:noruexample} is shown in Figure
\ref{fig:neumannerr}.  Maximum errors for combinations of $\eta_0$,
$m$ are shown in Table\ \ref{tab:neumannerror}.

\begin{figure}[b!]
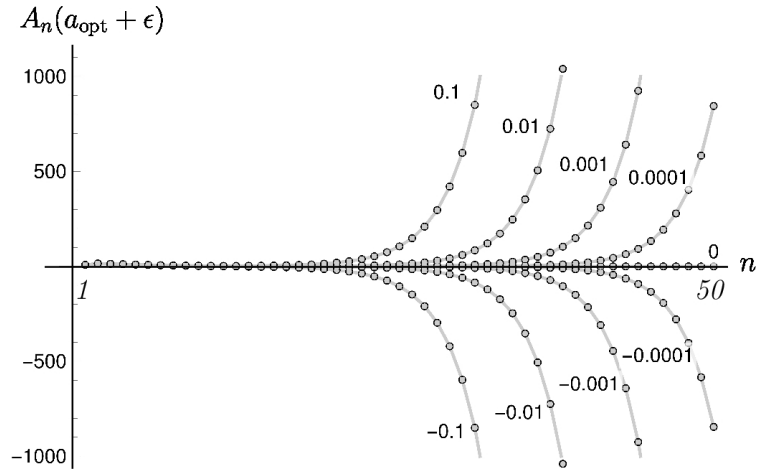

  \pic{fig_aopt}{3.5,-7.1}{5.6}{scale=.27}
  \caption{\small Rapid growth of the first 50 coefficients in nonconvergent
    algebraic solutions generated by to $a_\opt+\epsilon$, illustrated
    for $\eta_0=0.4$ and $m=2$, with $a_\opt$ approximated by
    $-D_{50}/C_{50}$. (The graphic is truncated: for $\epsilon=.1$, the
    coefficients reach approximately $10^7$. Even at this scale, the
    coefficients for $\epsilon=0$ are virtually indistinguishable from
    the horizontal axis.
    \label{fig:aopt}}
\end{figure}

\begin{figure}[b!]
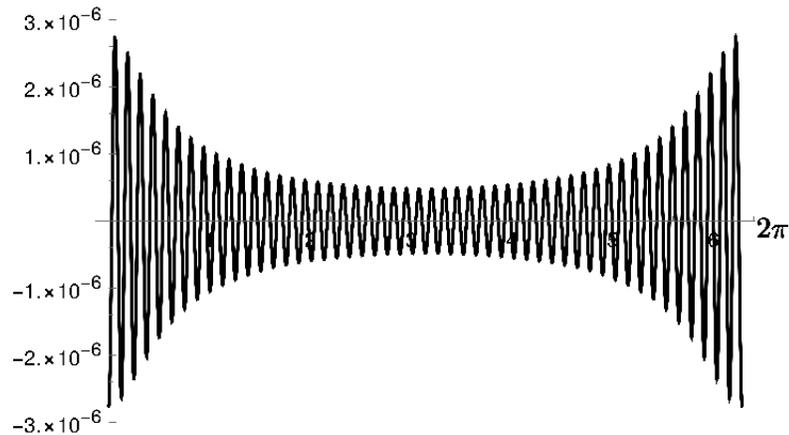

  \pic{fig_neumannerr}{3.1,-7.}{5.2}{scale=.4}
  \caption{\small Error in solution for Neumann problem for $\eta_0=0.4$, $m=2$ and
    50 terms, distributed over the range $0\le\theta\le2\pi$,
    with $\varphi=0$.)\label{fig:neumannerr}} 
\end{figure}
   
\begin{table}[b!] \centering
$  \begin{array}{l||r@{.}l|r@{.}l|r@{.}l|r@{.}l|}
 N&\multicolumn{2}{c|}{m=1} &\multicolumn{2}{c|}{m=2} \
&\multicolumn{2}{c|}{m=3} &\multicolumn{2}{c|}{m=4}\\\hline
15 &  4&7  &  3&6  &  2&8  &  1&0  \\
20 &  6&7  &  5&5  &  4&5  &  3&7  \\
25 &  8&4  &  7&2  &  6&3  &  5&4  \\
\end{array} 
$
\caption{\small Significant figures in the numerical solution of the
  Neumann problem on the torus showing the increase in accuracy with
  the number of terms. \label{tab:neumannerror}} 
\end{table}

\section{Exterior toroidal domain and toroidal
  shells} \label{Exterior_toroidal_domain_and_toroidal_shells}

\subsection{Exterior domain}

The formula for the normal derivative of an exterior harmonic function
$u\in\har(\dom^*)$ and the solution of the corresponding Neumann
problem are quite analogous to that of the interior domain $\dom$. The
exterior harmonics $\harext{n,m}{\nu,\mu}$ defined by
\eqref{eq:exteriorharmonic} are obtained from the interior harmonics
\eqref{eq:interiorharmonic} by writing $P_{n-1/2}^{m}(\cosh\eta)$ in
place of $Q_{n-1/2}^{m}(\cosh\eta)$ and are orthogonal with the same
weight function \eqref{eq:weight} but applied in $\dom^*$.

These Legendre functions of the first and second kinds satisfy identical
recurrence relationships \cite{Hob1931}. For this reason one finds that
$\nor\harext{n,m}{\nu,\mu}$ is obtained from the formula of Lemma
\ref{lemm:nori} by replacing similarly $Q_{n-1/2}^{m}(\cosh\eta)$ with
$P_{n-1/2}^{m}(\cosh\eta)$. Since the solution to the Dirichlet problem in $\dom^*$
with boundary condition $f$ given by \eqref{eq:fseries} is
\begin{align} 
u=\sum_{n,m,\nu,\mu} a\ind{n,m}{\nu,\mu}\harext{n,m}{\nu,\mu}(\eta,\theta,\varphi),
\end{align}
one finds that the normal derivative of $f$ will be given by equations
\eqref{eq:bfroma} when $q_{n,m}$ is replaced in
\eqref{eq:defrhosigmatau} with
\begin{align}
   p_{n,m} = P_{n-1/2}^m(\cosh\eta_0).
\end{align}
The method we have described is then applicable with no essential
changes for solving the Dirichlet-to-Neumann problem in $\dom^*$. It
is worth noting that parallel to \eqref{eq:qratiolimit} we have
\cite[p.\ 305]{Hob1931} that
\begin{align}
    \lim_{n\to \infty}\frac{p_{n-1,m}}{p_{n,m}}=e^{\eta_{0}}.
\end{align}

\subsection{Toroidal shell}

The results for interior and exterior domains may be combined to solve
the Neumann problem for a toroidal shell.  Let
$\eta_{\rm int}<\eta_{\rm ext}$. Common to an interior and an exterior
domain, one has the toroidal shell
\begin{align*}
  \Omega=  \Omega_{\eta_{\rm int},\eta_{\rm ext}}
  = \Omega_{\eta_{\rm ext}} \cap \Omega_{\eta_{\rm int}}^* .
\end{align*}
A general harmonic function $u$ in
$\Omega_{\eta_{\rm ext},\eta_{\rm int}}$ and continuous in the closure
can be expressed via an integral of its boundary values over
$\partial\Omega_{\eta_{\rm ext},\eta_{\rm int}}$ using the Poisson
kernel for the torus \cite[Ch.\ 1]{HarTri2008}. This integral is the
difference of the integrals over $\partial\Omega_{\eta_{\rm ext}}$ and
$\partial\Omega_{\eta_{\rm int}}$, which give a decomposition
$u=u_0+u_1$ with $u_0\in\har\Omega_{\eta_{\rm int}}$ and
$u_1\in\har\Omega_{\eta_{\rm ext}}^*$. Consequently, we may express $u$
as the sum of two series
\begin{align}
 \label{eq:doubleharmonic}
 u = \sum_{n,m,\nu,\mu}  c\ind{n,m}{\rm{int}\,\nu,\mu}\harint{n,m}{\nu,\mu} +
  \sum_{n,m,\nu,\mu}   c\ind{n,m}{\rm{ext}\,\nu,\mu}\harext{n,m}{\nu,\mu} ,
\end{align}
analogous to the Laurent series for holomorphic functions in an
annular domain in the complex plane, converging uniformly in proper
closed subdomains. (Note, however, that the inner and outer harmonics
together do not form an orthogonal system in
$\Omega_{\eta_{\rm ext},\eta_{\rm int}}$.)

A boundary function $f\colon\partial\Omega\to\R$ is given collectively
by its values for $\eta=\eta_{\rm int}$ and $\eta=\eta_{\rm ext}$
collectively, let us say
\begin{align} \label{eq:doubleboundary}
 f_{\rm int}(\theta,\varphi)= f(\eta_{\rm int},\theta,\varphi) &= 
    \sum a\ind{n,m}{{\rm int}\,\nu,\mu}
    I\ind{n,m}{\nu,\mu}[\eta_{1}] , \nonumber\\
 f_{\rm ext}(\theta,\varphi)=  f(\eta_{\rm ext},\theta,\varphi) &= 
    \sum a\ind{n,m}{{\rm ext}\,\nu,\mu}
     E\ind{n,m}{\nu,\mu}[\eta_{0}]. 
\end{align}
For $u$ to be the solution of the Dirichlet problem for $f$, we
combine \eqref{eq:doubleharmonic} with \eqref{eq:doubleboundary} to find
\begin{align}
q\ind{n,m}{\rm int} c\ind{n,m}{{\rm int}\,\nu,\mu}
  + p\ind{n,m}{\rm int} c\ind{n,m}{{\rm ext}\,\nu,\mu}  &=
   a\ind{n,m}{{\rm int} \,\nu,\mu},  \nonumber\\
    q\ind{n,m}{\rm ext} c\ind{n,m}{{\rm int}\,\nu,\mu}
  + p\ind{n,m}{\rm ext} c\ind{n,m}{{\rm ext}\,\nu,\mu}  &=
   a\ind{n,m}{{\rm ext} \,\nu,\mu},
\end{align}
where
\begin{alignat*}{3}
  q\ind{n,m}{\rm int} &=  Q_{n-1/2}^m(\cosh\eta_{\rm int}) , \quad
  q\ind{n,m}{\rm ext} &\,=\,&  Q_{n-1/2}^m(\cosh\eta_{\rm ext}) , \\
  p\ind{n,m}{\rm int} &=  P_{n-1/2}^m(\cosh\eta_{\rm int}) , \quad
  p\ind{n,m}{\rm ext} &\,=\,&  P_{n-1/2}^m(\cosh\eta_{\rm ext}) .
\end{alignat*}
This might be written symbolically as
\begin{align*}
\begin{pmatrix}
  q^{\rm int} & p^{\rm int} \\
  q^{\rm ext} & p^{\rm ext} 
\end{pmatrix}
\begin{pmatrix}
   c^{\rm int} \\ c^{\rm ext}         
\end{pmatrix}     
=
\begin{pmatrix}
   a^{\rm int} \\ a^{\rm ext}         
\end{pmatrix} .    
\end{align*}
To solve this system, one needs to verify that it is nonsingular.
Instead of a direct verification as in Lemma \ref{lemm:nonzerocoef},
we simply note that if for even one combination of $(n,m,\nu,\mu)$
there were more than one solution, one could easily construct a
Dirichlet problem in the shell $\Omega$ with more than one solution.

We see that $\nor\harint{n,m}{\nu,\mu}\vert_{\partial\Omega_{\rm int}}$ is
obtained from the formula of Lemma \ref{lemm:nori} with $\eta_0$ replaced
with $\eta_{\rm int}$, while
$\nor\harint{n,m}{\nu,\mu}\vert_{\partial\Omega_{\rm ext}}$ is obtained by
using $\eta_{\rm ext}$ instead. The boundary values
$\nor\harext{n,m}{\nu,\mu}\vert_{\partial\Omega_{\rm int}}$ and
$\nor\harext{n,m}{\nu,\mu}\vert_{\partial\Omega_{\rm ext}}$ are then
obtained by replacing $Q_{n-1/2}^m$ with $P_{n-1/2}^m$.
Once we have the harmonic function $u$ as in (\ref{eq:doubleharmonic}),
we have then 
\begin{align*} 
  \nor u\bigg\vert_{\partial\Omega_{\rm int}} =
  \sum_{n,m,\nu,\mu}  c\ind{n,m}{\rm{int}\,\nu,\mu}
  \nor\harint{n,m}{\nu,\mu} \bigg\vert_{\partial\Omega_{\rm int}}+
  \sum_{n,m,\nu,\mu}   c\ind{n,m}{\rm{ext}\,\nu,\mu}
  \nor\harext{n,m}{\nu,\mu}\bigg\vert_{\partial\Omega_{\rm int}} , \\
  \nor u\bigg\vert_{\partial\Omega_{\rm ext}} =
  \sum_{n,m,\nu,\mu}  c\ind{n,m}{\rm{int}\,\nu,\mu}
  \nor\harint{n,m}{\nu,\mu} \bigg\vert_{\partial\Omega_{\rm ext}}+
  \sum_{n,m,\nu,\mu}   c\ind{n,m}{\rm{ext}\,\nu,\mu}
  \nor\harext{n,m}{\nu,\mu}\bigg\vert_{\partial\Omega_{\rm ext}} .
\end{align*}
When the convergence of the series is absolute, one may apply the same
rearranging and reindexing as described in the proof of Theorem
\ref{theo:bfroma} to obtain the coefficients in the
Dirichlet-to-Neumann mapping $h=\Lambda f$,
\begin{align} \label{eq:defhdouble}
  h(\eta_{\rm int},\theta,\varphi) =  \chcosz \sum  b\ind{n,m}{{\rm int}\,\nu,\mu}  \Phi\ind{n}{\nu}(\theta) \Phi\ind{m}{\mu}(\varphi)  ,\nonumber \\
  h(\eta_{\rm ext},\theta,\varphi) =  \chcosz \sum  b\ind{n,m}{{\rm ext}\,\nu,\mu}
  \Phi\ind{n}{\nu}(\theta) \Phi\ind{m}{\mu}(\varphi).
\end{align}
 
As in the solution of the Neumann problem for the interior domain, the
equations for a fixed value of $(m,\nu,\mu)$ are independent of those
for another value of these indices. They can be solved recursively.
The only difference will be that one must solve a pair of equations at
each step.

\section{Conclusions}

We have presented an approach for studying the Dirichlet-to-Neumann
mapping and solving the Neumann problem for the Laplace operator on a
torus. It is shown how the Dirichlet-to-Neumann mapping may be
expressed by means of certain infinite series based on toroidal
harmonics.  We express the well-known necessary and sufficient
condition for the solvability of the Neumann problem (compatibility
condition), as well as the normalization condition in terms of the
Fourier coefficients. These results show that the Neumann problem
involves an infinite system of linear equations. The solution to the
problem involves a special twist in that the unique value of the free
parameter in this underdetermined linear system which truly gives a
solution cannot be found algebraically. Therefore we express it as a
limit of easily calculated algebraic expressions. Numerical results
are displayed for the accuracy of the algorithm. The paper concludes
showing how the results for interior and exterior domains apply to
solve the Neumann problem for a toroidal shell. The issue of relaxing
the convergence rate requirement on the expansion coefficients is a
thorny problem for the future.

\section{Acknowledgments}

Z.\ Ashtab was supported by CONACyT (Mexico).

\newcommand{\authors}[1]{#1}
\newcommand{\booktitle}[1]{#1}
\newcommand{\articletitle}[1]{#1}
\newcommand{\journalname}[1]{#1}
\newcommand{\volnum}[1]{#1}


\end{document}